\title{{\bf{Some computational results on small 3-nets embedded in a projective plane over a field}}}
\date{}
\author{G.~P.~Nagy and N.~Pace.}
\newtheorem{theorem}{Theorem}[section]
\newtheorem{proposition}[theorem]{Proposition}
\newtheorem{lemma}[theorem]{Lemma}
\theoremstyle{definition}
\newtheorem*{definition*}{Definition}
\newtheorem*{proposition*}{Proposition}
\newtheorem*{corollary*}{Corollary}
\newtheorem*{lemma*}{Lemma}
\newcommand{\Alt}{\mathrm{Alt}}
\def\C{\mathbf C}
\def\cA{\mathcal A}
\def\cB{\mathcal B}
\def\cC{\mathcal C}
\begin{document}

\maketitle
\begin{abstract}
In this paper, we investigate dual $3$-nets realizing the groups $C_3 \times C_3$, $C_2 \times C_4$,
$\Alt_4$ and that can be embedded in a projective plane $PG(2,\mathbb K)$, where $\mathbb K$ is an
algebraically closed field. We give a symbolically verifiable computational proof that every dual
$3$-net realizing the groups $C_3 \times C_3$ and $C_2 \times C_4$ is algebraic, namely, that its
points lie on a plane cubic. Moreover, we present two computer programs whose calculations show
that the group $\Alt_4$ cannot be realized if the characteristic of $\mathbb K$ is zero. 
\end{abstract}

\section{Introduction}
\label{problem}

In a projective plane a {\em{$3$-net}} consists of three pairwise disjoint classes of
lines such that every point incident with two lines from distinct classes is incident with
exactly one line from each of the three classes. If one of the classes has finite size,
say $n$, then the other two classes also have size $n$, called the {\em{order}} of the
$3$-net. In this paper we are considering $3$-nets in a projective plane $PG(2,\mathbb K)$
over an algebraically closed field $\mathbb K$ which are coordinatized by a group. Such a
$3$-net, with line classes $\cA,\cB,\cC$ and coordinatizing group $G=(G,\cdot)$, is
equivalently defined by a triple of bijective maps from $G$ to $(\cA,\cB,\cC)$, say
$$\alpha:\,G\to \cA,\,\beta:\,G\to \cB,\,\gamma:\,G\to \cC$$ such that $a\cdot b=c$ if and
only if $\alpha(a),\beta(b),\gamma(c)$ are three concurrent lines in $PG(2,\mathbb{K})$,
for any $a,b,c \in G$. If this is the case, the $3$-net in $PG(2,\mathbb{K})$ is said to
{\em{realize}} the group $G$. Recently, finite $3$-nets realizing a group in the complex plane
have been investigated in connection with complex line arrangements and resonance theory,
see \cite{bkm,knp2011,ys2004} and the references therein.

Since key examples arise naturally in the dual plane of $PG(2,\mathbb{K})$, it is
convenient to work with the dual concept of a $3$-net. Formally, a {\em{dual $3$-net}} of
order $n$ in $PG(2,\mathbb{K})$ consists of a triple $(\Lambda_1,\Lambda_2,\Lambda_3)$
with $\Lambda_1,\Lambda_2,\Lambda_3$ pairwise disjoint point-sets of size $n$, called
{\em{components}}, such that every line meeting two distinct components meets each
component in precisely one point. A dual $3$-net $(\Lambda_1,\Lambda_2,\Lambda_3)$
realizing  a group is {\em{algebraic}} if its points lie on a plane cubic. Moreover, we
say that the dual $3$-net $(\Lambda_1,\Lambda_2,\Lambda_3)$ is \emph{of conic-line
type (triangular)} if the components are contained in the union of a line and a
nonsingular conic (in the union of three lines). 

In our computer-aided investigation, combinatorial methods are used to study finite
$3$-nets realizing the groups $\C_2 \times \C_4, \C_3 \times \C_3,$ and $\Alt_4$.
These results are fundamental for the complete classification of 3-nets embedded in a
projective plane over a field, see \cite{knp2011}. Indeed, large groups could be dealt
with theoretical results, but small groups having elements of order less than $5$ needed a
more explicit computation. This is main motivation of this paper.

We can summarize our results in the following theorem.
\begin{theorem}
\label{mainteo} Let $(\Lambda_1,\Lambda_2,\Lambda_3)$ be a dual $3$-net of order $n$ which
realizes a group $G$ in the projective plane $PG(2,\mathbb{K})$ defined over an
algebraically closed field $\mathbb{K}$ of characteristic $p$, where $p=0$ or $p\geq 5$.
We also assume that $n<p$ whenever $p>0$. The following statements hold. 
\begin{itemize}
\item[\rm{(I)}] If $G \cong \C_3 \times \C_3$ or $G \cong \C_2 \times \C_4$, then
$(\Lambda_1, \Lambda_2, \Lambda_3)$ is algebraic.
\item[\rm{(II)}] If $p=0$, then the group $\Alt_4$ cannot be realized.
\end{itemize}
\end{theorem}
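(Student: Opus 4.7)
The plan is to reduce each case to an explicit polynomial feasibility problem over $\mathbb K$ and then resolve it by a Gr\"obner basis calculation. Write the components as $\Lambda_i=\{P^{(i)}_g\mid g\in G\}$ for $i=1,2,3$; the coordinatization axiom for a dual $3$-net becomes the system
\[
 \det\bigl[P^{(1)}_a \mid P^{(2)}_b \mid P^{(3)}_{ab}\bigr]=0 \qquad \text{for all }a,b\in G,
\]
where the columns are homogeneous coordinate vectors of the corresponding points. Using the $PGL(3,\mathbb K)$-action one can send four points in general position to the standard frame, eliminating the eight projective normalization parameters; each remaining point then contributes two affine unknowns, and the determinantal relations become polynomial equations with integer coefficients. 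A useful preliminary reduction is to bootstrap from subgroups: every proper $H<G$ gives rise to a sub-dual-$3$-net realizing a smaller group ($\C_3$, $\C_2\times\C_2$, or $\C_4$), whose structure in $PG(2,\mathbb K)$ is already classified in \cite{knp2011} as triangular or of conic-line type, which fixes many unknowns before the main computation begins.

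For part (I), I would introduce a generic plane cubic $F=\sum c_{ijk}X^iY^jZ^k$ with ten indeterminate coefficients, impose $F(P)=0$ on nine of the $3|G|$ points chosen so that modulo the collinearity ideal the $c_{ijk}$ are determined up to a common scalar, and then verify symbolically that $F(P^{(i)}_g)$ reduces to zero in the quotient ring for every remaining point. A clean reduction to zero for every such check proves algebraicity. For part (II) the approach is the opposite: compute a Gr\"obner basis of the full collinearity ideal for $G=\Alt_4$ over $\mathbb Q$ and verify that it contains $1$. By the Nullstellensatz this certifies that no solution exists in any algebraically closed field of characteristic zero, hence that $\Alt_4$ is not realizable there.

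The main obstacle is computational size. Even after the subgroup reductions, the systems have many unknowns and more than a hundred determinantal relations, so a black-box Gr\"obner basis run will not terminate in reasonable time. The real work lies in engineering the computation: choosing the frame-defining points and the elimination order to exploit the subgroup structure (in particular the Klein four-group inside $\Alt_4$ and the cyclic factors of $\C_3\times\C_3$ and $\C_2\times\C_4$), case-splitting according to which of the triangular or conic-line configurations each proper subnet realizes, and arranging the branches so that each finishes and so that the entire proof can be independently replayed on a standard computer algebra system. This engineering effort is the reason why two separate specialized programs are presented rather than a single generic calculation.
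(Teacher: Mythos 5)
Your overall framing --- turn each case into a polynomial system with integer coefficients, normalize by $PGL(3,\mathbb K)$, exploit subgroups, and decide by Gr\"obner bases --- matches the spirit of the paper, but two of your key certificates are not the ones that actually exist, and the argument would stall at exactly those points. For part (II), the ideal you propose to test does \emph{not} contain $1$: the collinearity ideal (even after fixing a frame and dividing out the known nonzero factors) has plenty of degenerate solutions, so the Nullstellensatz certificate $1\in I$ simply is not there and your Gr\"obner basis run would return a nontrivial basis with no conclusion attached. The certificate that works is different in kind: the paper shows that the reduced ideal contains the polynomials $d_1-d_4$ and $d_2-d_5$, which force the two points $5_1$ and $9_1$ of the configuration to coincide, contradicting the distinctness of the $36$ points. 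In other words, the variety is nonempty but consists entirely of degenerate configurations, and you must extract a degeneracy-forcing element of the ideal rather than prove the ideal is the whole ring. The same issue undercuts your plan for part (I): ``$F(P^{(i)}_g)$ reduces to zero in the quotient ring'' is ideal membership, which is strictly stronger than vanishing on the locus of \emph{proper} realizations; degenerate components (coincident points, collinear blocks, nine base points failing to impose independent conditions on cubics) will in general prevent the normal form from being zero even though the geometric statement is true. You would have to saturate by all non-degeneracy conditions, which you do not address and which is the hard part.

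The paper avoids both problems by not relying on a single black-box ideal computation. For $\C_2\times\C_4$ the proof is entirely synthetic: repeated application of Lam\'e's theorem to an explicit list of Lam\'e configurations propagates a cubic through nine points to all $24$ points, with no Gr\"obner basis at all. For $\C_3\times\C_3$ the configuration is parametrized by six scalars $a,b,u,v,x,y$; a linear-series/eigenvector argument shows the parameters must satisfy one of $u=1$, $v=1$, $u=v$, or $a^3=b^3=1$, and each of these special loci is then shown to yield an algebraic dual $3$-net by a separate geometric argument (pencils of cubics invariant under the order-three collineation, the group law on the cubic) --- the only Gr\"obner computation is a small, symbolically verifiable one with explicit cofactors in Lemma \ref{lm:a3b3}. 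Your preliminary reduction also overstates what is known: realizations of $\C_3$ are Hesse configurations lying on a nonsingular cubic, so they are in general neither triangular nor of conic-line type, and that classification would not fix the unknowns in the way you describe. To repair your proposal you would need, at minimum, to replace the $1\in I$ test by a search for degeneracy-forcing ideal elements and to saturate (or case-split) away the degenerate components before any normal-form test in part (I).
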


The proof is divided in three parts, see Sections \ref{sec_c3c3},\ref{sec_c2c4}, or \ref{sec_alt4},
according as $G\cong \C_3 \times \C_3$, $G\cong \C_2 \times \C_4$, or $G\cong \Alt_4$. Our notation
and terminology are standard, see \cite{hkt}. In view of Theorem \ref{mainteo}, $\mathbb{K}$ denotes
an algebraically closed field of characteristic $p$ where either $p=0$ or $n<p$ where $n$ denotes
the order of the dual $3$-net.

\section{$G \cong \C_3 \times \C_3$} \label{sec_c3c3}

We denote by $G=\{0,\ldots,8\}$ the elementary abelian group of order $9$ given by the
multiplication table
\[\begin{array}{cccccccccc}
 \\ &0&1&2&3&4&5&6&7&8\\
&1&2&0&4&5&3&7&8&6\\
&2&0&1&5&3&4&8&6&7\\
&3&4&5&6&7&8&0&1&2\\
&4&5&3&7&8&6&1&2&0\\
&5&3&4&8&6&7&2&0&1\\
&6&7&8&0&1&2&3&4&5\\
&7&8&6&1&2&0&4&5&3\\
&8&6&7&2&0&1&5&3&4
\end{array}.\]
Let $H$ be the subgroup $\{0,1,2\}$ of $G$.

Let $\mathbb{K}$ be an algebraically closed field whose characteristic is either $0$ or
more than $9$. In this paper, all points are points of the projective plane over
$\mathbb{K}$. We denote by $\omega$ a cubic root of unity in $\mathbb{K}$.

It is easy to see that realizations of the cyclic group of order $3$ are precisely the
Pappus configurations. The point set $\mathcal P$ of a triangular dual $3$-net realizing
$\C_3$ consists of $9$ points such that any line intersects $\mathcal P$ in $1$ or $3$
points. In other words, $\mathcal P$ forms an $AG(2,3)$, where $AG(n,q)$ denotes the
affine geometry over the finite field $\mathbb F_q$. It is also well known that any
$AG(2,3)$ embedded in $PG(2,\mathbb{K})$ is a \emph{Hesse configuration,} that is, the
points are the inflection points of a nonsingular cubic curve. 

\begin{lemma} \label{lm:pencil}
Let
\[\Delta'=\{0_1,1_1,2_1, 0_2,1_2,2_2, 0_3,1_3,2_3\}\]
be a realization of $H$. Then there is a unique cyclic collineation $\alpha$ of order
three mapping
\[0_1,1_1,2_1, 0_2,1_2,2_2, 0_3,1_3,2_3 \mbox{ to } 1_1,2_1,0_1, 1_2,2_2,0_2,
2_3,0_3,1_3,\]
respectively. $\alpha$ is never central. The cubic curves containing $\Delta$ form a
pencil. All these cubics are invariant under $\alpha$. \qed
\end{lemma}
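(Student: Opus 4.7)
Proof plan.

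Since $\Delta'$ realizes $H\cong \C_3$, the paragraph preceding the lemma tells us that the nine points of $\Delta'$ are the inflection points of a nonsingular plane cubic. Fixing an origin endows $\Delta'$ with the structure of the $3$-torsion group $(\mathbb{F}_3)^2$ of that cubic, and the full pencil of cubics through $\Delta'$ can be put in Hessian normal form $\lambda(x^3+y^3+z^3)+\mu xyz=0$. My first step is a short bookkeeping computation: using only that the nine lines $\ell_{a,b}=\{a_1,b_2,(a+b)_3\}$ of the net must be lines of the Hesse configuration, I would show that the three components $\Lambda_1,\Lambda_2,\Lambda_3$ are precisely the three cosets of the unique subgroup $S\leq (\mathbb{F}_3)^2$ of order three that does \emph{not} occur as a line through $0_1$ in the net, and that the prescribed permutation coincides with translation by a generator of $S$.

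Existence of $\alpha$ then follows from the classical fact that translation by a $3$-torsion element of the cubic is induced by a diagonal collineation of the form $(x,y,z)\mapsto (x,\omega y,\omega^2 z)$ in suitable Hessian coordinates. Uniqueness is the fundamental theorem of projective geometry: no four points of a Hesse configuration are collinear, so among the nine points of $\Delta'$ one easily produces four in general position, and two collineations of $PG(2,\mathbb{K})$ agreeing on $\Delta'$ must coincide.

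For non-centrality, a direct computation shows that the diagonal collineation above has exactly three fixed points (the vertices of the coordinate triangle) and exactly three fixed lines (the coordinate axes). A central collineation of order three is a homology (elations are excluded since $p\neq 3$), whose fixed-line locus is an entire pencil through its center; this is incompatible with the finite count of three fixed lines. For the pencil statement, the nine inflection points of a smooth cubic impose only eight conditions on cubics (the standard Cayley--Bacharach argument), so the cubics through $\Delta'$ form a pencil; and both generators $x^3+y^3+z^3$ and $xyz$ are manifestly preserved by the diagonal collineation, whence every member of the pencil is $\alpha$-invariant.

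The main obstacle is the initial bookkeeping: identifying the combinatorial labelling $a_i$ with the additive structure on the $3$-torsion so that the prescribed permutation is recognised as a $3$-torsion translation rather than some other element of the Hessian group. Once this identification is in place, each of the four assertions either reduces to a direct coordinate calculation in Hessian normal form or invokes a standard fact about Hessian configurations.
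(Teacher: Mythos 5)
The paper supplies no proof of this lemma at all (it is stated as standard, with the qed symbol attached to the statement), so the only question is whether your argument is sound --- and it is not, because its very first step misreads the preceding paragraph. That paragraph says that the point set of a \emph{triangular} dual $3$-net realizing $\C_3$ forms an $AG(2,3)$ and hence a Hesse configuration; a general realization of $\C_3$ is merely a Pappus configuration whose components need not be collinear. Lemma \ref{lm:pencil} is stated for an arbitrary realization $\Delta'$ of $H$, and it is invoked in the paper precisely after the normalization that $0_1,1_1,2_1$ are \emph{not} collinear (they become the coordinate triangle $(1,0,0),(0,1,0),(0,0,1)$, and $\alpha$ is the cyclic permutation matrix, not a diagonal map). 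So in the relevant case the nine points of $\Delta'$ are not the inflection points of any cubic, the Hessian normal form $\lambda(x^3+y^3+z^3)+\mu xyz$ is unavailable, and everything you build on it --- the identification of the components with cosets of a $3$-torsion subgroup of the Hessian group, the diagonal collineation $(x,y,z)\mapsto(x,\omega y,\omega^2 z)$, the fixed-point count for non-centrality, and the invariance of the generators $x^3+y^3+z^3$ and $xyz$ --- establishes the lemma only in the one degenerate case the paper explicitly steps away from.

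A proof in the required generality has to start elsewhere: writing $\ell_{ij}$ for the net line through $i_1,j_2,(i+j)_3$, the three split cubics $\ell_{00}\cup\ell_{11}\cup\ell_{22}$, $\ell_{01}\cup\ell_{12}\cup\ell_{20}$, $\ell_{02}\cup\ell_{10}\cup\ell_{21}$ each contain all nine points, giving two independent cubic forms and hence (at least) a pencil through $\Delta'$; on a general, nonsingular member the components become the three cosets of an order-$3$ subgroup $\langle T\rangle$ for the cubic's group law, and translation by the $3$-torsion point $T$ is induced by a projectivity that fixes the base locus and preserves every member of the pencil. Uniqueness (four points of $\Delta'$ in general position, using that a net line cannot meet a component twice) and non-centrality (every point orbit of a homology is collinear with the center, which is incompatible with the net structure) then go through much as you indicate; but those are the easy parts, and the load-bearing step --- getting $\Delta'$ onto an $\alpha$-invariant pencil of cubics without assuming it is a Hesse configuration --- is exactly what your write-up skips.
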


\begin{lemma} \label{lm:ag23char}
Let $X$ be a set of nine points in a projective plane such that for all $A,B\in X$, the
line $AB$ contains a third point of $X$. Then, $X$ is either contained in a line, or form
an $AG(2,3)$. \qed
\end{lemma}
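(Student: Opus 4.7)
My plan is to first bound the maximum number of points of $X$ lying on a single line, and then recognize the resulting incidence structure as a Steiner triple system on $9$ points, which is uniquely $AG(2,3)$.

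If $X$ lies on a line there is nothing to show; otherwise set $r = \max_{\ell} |\ell \cap X|$ over all projective lines $\ell$, and fix a line $\ell_0$ attaining this maximum. For each $B \in X \setminus \ell_0$ and each $P \in \ell_0 \cap X$, the hypothesis supplies a third point of $X$ on the line $BP$, and that point must lie off $\ell_0$ (otherwise $BP$ would meet $\ell_0$ in two distinct points and hence coincide with $\ell_0$, forcing $B \in \ell_0$). Distinct choices of $P$ yield distinct such third points, since the lines $BP$ and $BP'$ meet only at $B$. Counting the $(9-r)-1$ off-$\ell_0$ points of $X$ different from $B$ then gives $r \leq 8-r$, so $r \leq 4$.

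To exclude $r=4$, I would observe that the inequality above is then tight, so for every $B \in X \setminus \ell_0$ each of the four lines $BP_i$ (with $P_i \in \ell_0 \cap X$) contains exactly three points of $X$, namely $B$, $P_i$, and one other off-$\ell_0$ point. Consequently every pair of off-$\ell_0$ points of $X$ is collinear with some $P_i$. Fixing $P_1$, the lines through $P_1$ distinct from $\ell_0$ which meet $X$ then partition the $5$ off-$\ell_0$ points of $X$ into blocks of size $2$, which is impossible since $5$ is odd. Hence $r = 3$, and every secant of $X$ meets $X$ in exactly three points.

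This means that $X$, together with its collinear triples, is a Steiner triple system $S(2,3,9)$, and I would close by invoking the classical fact that such a system is unique up to isomorphism and coincides with $AG(2,3)$; since the triples are realized as genuine lines of $PG(2,\mathbb{K})$, the set $X$ itself forms an $AG(2,3)$ in the projective plane. The only step requiring any real care is the parity obstruction ruling out $r = 4$; the rest is double counting followed by the uniqueness of $S(2,3,9)$.
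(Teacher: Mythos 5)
Your argument is correct. Note that the paper states Lemma \ref{lm:ag23char} with no proof at all (the statement simply carries a \verb|\qed|, the result being treated as folklore), so there is no in-paper argument to compare against; what you have written is a legitimate way to fill that gap. Each step checks out: the third point on a line $BP$ with $B\notin\ell_0$, $P\in\ell_0\cap X$ is forced off $\ell_0$, the $r$ resulting points are pairwise distinct because two such lines meet only in $B$, and this yields $r\le 8-r$. Your exclusion of $r=4$ is the one place needing care, and it holds up: tightness forces every line $BP_i$ to carry exactly two off-$\ell_0$ points of $X$, so the lines through a fixed $P_1$ other than $\ell_0$ would partition the five off-$\ell_0$ points into pairs, which is impossible. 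With $r=3$ every secant is a $3$-secant, the collinear triples form an $S(2,3,9)$, and the classical uniqueness of that Steiner system (it is $AG(2,3)$) finishes the proof, with the blocks realized by genuine lines of the plane as required.
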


In the sequel, we denote by $\Delta=\{0_1,\ldots,8_1, 0_2,\ldots,8_2, 0_3,\ldots,8_3\}$ a
realization of $G$. We denote by $\Delta'$ the subset of $\Delta$ realizing the subgroup
$H=\{0,1,2\}$. We will often use that the points of $\Delta$ can be re-indexed and the
blocks $\{i_1\}$, $\{j_2\}$, $\{k_3\}$ can be interchanged.

\begin{lemma}
There is a line which intersects $\Delta$ in exactly two points.
\end{lemma}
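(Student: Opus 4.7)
The plan is to argue by contradiction: suppose every line meeting $\Delta$ in at least two points meets it in three or more.

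I would first reduce the problem to the internal geometry of each individual component. Given distinct $P,Q \in \Lambda_i$, the line $PQ$ carries a third point $R$ of $\Delta$; the net axiom forbids $R \in \Lambda_j$ with $j \neq i$, since otherwise a line meeting two components would contain two points of $\Lambda_i$, so $R \in \Lambda_i$. Hence every $2$-secant of $\Lambda_i$ is a $3$-secant of $\Lambda_i$, and Lemma~\ref{lm:ag23char} forces each $\Lambda_i$ to be either contained in a single line or to form an $AG(2,3)$, i.e., a Hesse configuration.

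The argument then splits in two. In the collinear case, where some $\Lambda_i$ is contained in a line $\ell$, I would appeal to the classification of dual $3$-nets in $PG(2,\mathbb K)$: having a component on a line excludes the algebraic case, leaving only the triangular and conic-line types, which realize cyclic and dihedral groups respectively, neither of which contains $\C_3 \times \C_3$. The quickest way to see this directly is to realize the group translations of $\Lambda_i \subset \ell$ as M\"obius transformations of $\ell\cong\mathbb P^1$, using suitable compositions of perspectivities from points of the other two components, and thus obtain an embedding $\C_3 \times \C_3 \hookrightarrow PGL_2(\mathbb K)$; this is ruled out by Dickson's classification, since the finite subgroups of $PGL_2(\mathbb K)$ are cyclic, dihedral, $\Alt_4$, $S_4$, or $\Alt_5$. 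In the Hesse case, where every component is a Hesse configuration, the combinatorics is self-consistent: the $81$ mixed lines together with the $3 \cdot 12 = 36$ same-component lines account for exactly all $\binom{27}{2}=351$ point-pairs of $\Delta$ as triples. Here I would invoke Lemma~\ref{lm:pencil}: applied to the sub-realizations of the four subgroups of $G$ of order three, it produces four distinct non-central cyclic collineations of order three. Tracking their joint action on the $12$-line structure of the Hesse configuration on $\Lambda_1$ and on the three associated pencils of cubics should force at least one of these collineations to be central, contradicting the non-centrality statement of Lemma~\ref{lm:pencil}.

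The main obstacle is the Hesse case. The $PGL_2$-embedding obstruction of the collinear case is unavailable here, since the Hesse configuration already admits a regular $\C_3 \times \C_3$ action by collineations (the translation subgroup of the Hesse group), so there is no contradiction at that level. The contradiction must therefore be distilled from the non-centrality clause of Lemma~\ref{lm:pencil}, and turning it into an effective geometric obstruction when three Hesse configurations are glued together by a $\C_3 \times \C_3$ net structure is the delicate step.
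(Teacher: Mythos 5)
Your reduction is the same as the paper's: assuming no $2$-secant of $\Delta$ exists, every $2$-secant of a component is forced to be a $3$-secant of that same component, so Lemma~\ref{lm:ag23char} makes each component either collinear or an $AG(2,3)$. The collinear case you dispose of by embedding $\C_3\times\C_3$ into $PGL_2(\mathbb K)$ and citing Dickson; the paper instead quotes Theorem 5.1 of Blokhuis--Korchm\'aros--Mazzocca together with the fact that $\mathbb K^*$ has no elementary abelian subgroup of order $9$. Either route is acceptable here.

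The genuine gap is the Hesse case, which you correctly identify as the heart of the matter but do not close. Your proposed contradiction --- that the order-three collineations attached to the four subgroups of order $3$ should be forced to include a central one --- is only a hope, and as you yourself note the Hessian group already contains a non-central $\C_3\times\C_3$ of collineations preserving a single Hesse configuration, so no obstruction is visible at that level. The paper's argument is global and combinatorial rather than collineation-theoretic: since every line meets $\Delta$ in $1$ or $3$ points (the $81$ mixed triples plus the $36$ intra-component triples exhaust all $\binom{27}{2}$ pairs, exactly the count you made), $\Delta$ is a Steiner triple system on $27$ points in which any three non-collinear points generate a subsystem of order $9$; hence it is a Hall triple system of order $27$, which must be $AG(3,3)$. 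The contradiction then comes from Taniguchi's theorem that $AG(3,3)$ admits no embedding into a projective plane over a field of characteristic different from $3$. This external non-embeddability result (or a worked-out substitute for it) is the missing ingredient; without it your argument does not terminate.
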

\begin{proof}
Assume that no line intersects $\Delta$ in exactly two points. As $\mathbb{K}^*$ has no
elementary
abelian subgroup of order $9$, $\Delta$ cannot be triangular or of conic-line type.
Theorem 5.1 of \cite{bkm} implies that none of the blocks $\{i_1\}$, $\{j_2\}$, $\{k_3\}$  is
contained in a line. Lemma \ref{lm:ag23char} implies that these blocks must form an
$AG(2,3)$. Moreover, each line intersecting $\Delta$ in more than two points, intersect
$\Delta$ in precisely three points. This means that with respect to the line
intersections, $\Delta$ forms a Steiner triple system. As any three points of $\Delta$
generate a subsystem of order $9$, $\Delta$ is in fact a Hall triple system, cf. \cite[pages
496--499]{crchb}. As
$|\Delta|=27$, we obtain that $\Delta$ is an embedding of $AG(3,3)$ in a projective plane,
which is not possible by \cite{tani2000} if $\mathrm{char}(\mathbb K)\neq 3$. 
\end{proof}

By re-indexing $\Delta$, we can suppose that the line $0_11_1$ intersects $\Delta$ in
$\{0_1,1_1\}$, that is, $0_1,1_1,2_1$ are not collinear. Let $\alpha$ be the
cyclic collineation of order three corresponding to the subnet $\Delta'$ realizing
$H=\{0,1,2\}$. We will choose our projective coordinate system such that the following
hold:
\begin{enumerate}
\item[(1)] $0_1=(1,0,0)$, $1_1=(0,1,0)$ and $2_1=(0,0,1)$.
\item[(2)] $F=(1,1,1)$ is a fixed point of $\alpha$.
\item[(3)] If the lines $0_10_2$, $1_11_2$, $2_12_2$ are concurrent then $F=(1,1,1)$ is
their intersection.
\end{enumerate}

Notice that the lines $i_1 j_1$ contain no fixed point of $\alpha$, hence (2) does not
conflict with (1). Furthermore, if $0_10_2$, $1_11_2$, $2_12_2$ are concurrent then their
intersection is a fixed point of $\alpha$.

The collineation $\alpha$ has the matrix form
\[\left ( \begin{array}{ccc}0&0&1\\1&0&0\\0&1&0\end{array} \right ).\]
As $0_3$ is not on the lines $1_i1_j$, $0_3$ has coordinates of the form $(a,b,1)$ with
$a,b \neq 0$. Then, we can compute the coordinates of the points $1_3=(b,1,a)$,
$2_3=(1,a,b)$ and $0_2=(b, b a, a), 1_2=(a, b, b a), 2_2=(b a, a, b)$.

Let $\beta$ be the cyclic collineation of order $3$ corresponding to the subnet
$\{0_1,1_1,2_1,3_2,4_2,5_2,3_3,4_3,5_3\}$. The matrix of $\beta$ has the form
\[\left ( \begin{array}{ccc}0&0&u\\v&0&0\\0&1&0\end{array} \right )\]
for some nonzero $u,v\in \mathbb{K}$. The point $3_2$ has nonzero coordinates $(x,y,1)$.
Then, we have $4_2=(u,vx,y)$, $5_2=(uy,uv,vx)$ and
\[3_3=(uy,vxy,vx), \; 4_3=(xy,vx,y), \; 5_3=(uvx,vuy,vxy).\]

For all points $i_1$, $i\in \{3,\ldots,8\}$, there are three lines of the form $ j_2 k_3$,
$j,k\in \{0,\ldots,5\}$ such that $i_1 \in j_2 k_3$. The fact that the corresponding line
triples are concurrent, can be expressed by the equations
\begin{eqnarray*}
\hat f_3&=&\det(0_2 \times 3_3, 1_2\times 4_3,2_2\times 5_3)=0,\\
\hat f_4&=&\det(0_2 \times 4_3, 1_2\times 5_3,2_2\times 3_3)=0,\\
\hat f_5&=&\det(0_2 \times 5_3, 1_2\times 3_3,2_2\times 4_3)=0,\\
f_6&=&\det(3_2 \times 0_3, 4_2\times 1_3,5_2\times 2_3)=0,\\
f_7&=&\det(3_2 \times 1_3, 4_2\times 2_3,5_2\times 0_3)=0,\\
f_8&=&\det(3_2 \times 2_3, 4_2\times 0_3,5_2\times 1_3)=0.
\end{eqnarray*}
The values $a,b,u,v,x,y$ determine $\Delta$ uniquely. The $f_i$'s ($i\in
\{3,\ldots,8\}$) are polynomial expressions of these values. In fact, we will look at
$a,b,u,v,x,y$ as indeterminates over $\mathbb{K}$ and at the $f_i$'s as elements of
$\mathbb{K}[a,b,u,v,x,y]$. The polynomials $f_7,f_8,f_9$ have degree three in
$x,y$, while for $i=4,5,6$, the polynomials $\hat f_i$ have the form $\hat f_i=abvxy
f_i$, where $f_i$ is in $\mathbb{K}[a,b,u,v,x,y]$. The degree of $f_4,f_5,f_6$ in $x,y$ is
three.

Generally speaking, we are looking for specializations such that the corresponding
configuration gives rise to a proper realization of $G$.

\begin{lemma} \label{lm:u3v3}
If any of the equations $u=1$, $v=1$, $u=v$ holds then $\Delta$ is algebraic.
\end{lemma}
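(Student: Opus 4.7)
My plan is to reduce to the single case $u = 1$ by the symmetries of the configuration, and then to exhibit an explicit cubic through all $27$ points of $\Delta$. The three equalities $u = 1$, $v = 1$, $u = v$ correspond to the homology $\alpha^{-1}\beta = \mathrm{diag}(v, 1, u)$ having a repeated eigenvalue in one of the three coordinate positions; these three positions are interchanged by the relabeling freedom of $\{0_1, 1_1, 2_1\}$ together with the $S_3$-action on the components $\Lambda_1, \Lambda_2, \Lambda_3$. Thus it suffices to treat $u = 1$.

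Under $u = 1$, I would apply Lemma \ref{lm:pencil} to each of the two $\C_3$ sub-$3$-nets at hand, obtaining two pencils of cubics: $P_\alpha$ of $\alpha$-invariant cubics through the nine points of $\Delta'$, and $P_\beta$ of $\beta$-invariant cubics through the nine points of the subnet $\{0_1, 1_1, 2_1, 3_2, 4_2, 5_2, 3_3, 4_3, 5_3\}$. Both pencils live in the $7$-dimensional space of cubics vanishing on the three shared points $0_1, 1_1, 2_1$. Using the circulant/monomial form of $\alpha$ and $\beta$, one can decompose this space into simultaneous eigenspaces of the pullback actions $\alpha^*$ and $\beta^*$. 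The specialisation $u = 1$ is exactly what is needed for a common eigenline to meet both $P_\alpha$ and $P_\beta$, producing a cubic $\mathcal C \in P_\alpha \cap P_\beta$. By construction, $\mathcal C$ contains the $15$ points of the two sub-nets.

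To complete the proof, I would show that $\mathcal C$ passes through the remaining $12$ points $\{3_1, \ldots, 8_1\} \cup \{6_2, 7_2, 8_2\} \cup \{6_3, 7_3, 8_3\}$ of $\Delta$. Each such point $p$ is implicitly defined as the concurrency of three lines of the net via one of the equations $f_3 = \cdots = f_8 = 0$ (or the analogous equations for the other components). For each $p$, a Cayley--Bacharach argument applied to a suitable pair of reducible cubics (each a union of three lines of $\Delta$) exhibits $p$ as the ninth point of a complete intersection whose other eight points already lie on $\mathcal C$; hence $p \in \mathcal C$. Iterating this bootstrap through the multiplication table of $\C_3 \times \C_3$ exhausts all $27$ points.

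The main obstacle is the middle step: producing $\mathcal C$ in closed form and verifying its vanishing on the fifteen points of the two sub-nets as a polynomial identity modulo the ideal $(u - 1, f_3, \ldots, f_8) \subset \mathbb K[a, b, v, x, y]$. In keeping with the computational spirit of the paper, this identity is most naturally established symbolically, for example by a Gr\"obner basis computation in a computer algebra system.
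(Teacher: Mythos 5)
There is a genuine gap at the centre of your argument, and it is precisely the step you flag as "the main obstacle": you never establish that a common $\alpha$- and $\beta$-invariant cubic through the fifteen points of the two subnets actually exists when only $u=1$ is assumed. The paper's proof does not obtain such a cubic from $u=1$ alone; its key step is to show that \emph{any one} of the equations $u=1$, $v=1$, $u=v$ forces the other two. This is done by exhibiting explicit rational identities such as
\[
\left.\frac{f_6}{\det(3_2\times 0_3,\,4_2\times 1_3,\,0_2\times 4_3)}\right|_{u=1}=\frac{v-1}{ay-1},
\]
where the concurrency $f_6=0$ (the existence of the point $6_1$) together with the non-vanishing of the denominator yields $v=1$. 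Only after concluding $u=v=1$, hence $\alpha=\beta$, does the common invariant cubic appear: it is the member $\Gamma$ of the pencil of Lemma \ref{lm:pencil} through $3_2$, and an explicit computation shows $3_3\in\Gamma$, whence $\langle\alpha\rangle$-invariance gives the remaining points of the second subnet. Your proposed mechanism --- a common eigenline of $\alpha^*$ and $\beta^*$ meeting both pencils under $u=1$ alone --- is asserted, not proved, and it bypasses the concurrency conditions $f_6=f_7=f_8=0$ that are the actual source of the conclusion. Your symmetry reduction to the single case $u=1$ is plausible (it amounts to tracking how the normalisation of $\beta$ transforms under relabelling), but it is also unverified and, in any case, does not remove the central difficulty.

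The final bootstrap is also not secure as described. The twelve remaining points are each defined as the common point of \emph{three concurrent} lines of the net, so they do not sit as the ninth point of a Lam\'e configuration in the way your Cayley--Bacharach sketch requires (the Lam\'e hypothesis excludes a line of one triple passing through an intersection point of the other triple). The paper instead analyses the possible shapes of $\Gamma$: complete reducibility is excluded, the conic-plus-line case is handled by tracking $\langle\alpha\rangle$-orbits between the two components, and the irreducible case uses the group law on the nonsingular points, where the $\langle\alpha\rangle$-orbits are cosets of a subgroup of order $3$. Some case analysis of this kind (or an explicit symbolic verification) would be needed to make your last step rigorous.
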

\begin{proof}
If $u=v=1$ then $\alpha=\beta$. Let $\Gamma$ be the cubic curve containing $\Delta'$
and $3_2$. The equation of $\Gamma$ can be computed explicitly, and one sees that if
$u=v=1$ then $3_3\in \Gamma$. As by Lemma \ref{lm:pencil} $\Gamma$ is invariant under
$\alpha=\beta$, we have $4_2,5_2,4_3,5_3 \in \Gamma$, too. $\Gamma$ cannot be completely
reducible since then, some $i_1$ would be collinear with some $j_2,k_2$. Suppose that
$\Gamma=\ell\cup C$ with line $\ell$ and irreducible conic $C$. Then $\ell,C$ are
$\alpha$-invariant and the $1_i$'s are in $3$. If $0_2\in 3$ then $0_3,1_3,2_3\in \ell$,
and all $2_j$'s are in $3$ and all $3_k$'s are in $\ell$. As $\{0_1,1_1,2_1\}$,
$\{0_2,1_2,2_2\}$, $\{0_3,1_3,2_3\}$, $\{3_2,4_2,5_2\}$, $\{3_3,4_3,5_3\}$ are all orbits
of $\langle \alpha \rangle$ and the lines $0_23_3, 1_24_3, 2_25_3$ are concurrent, we
have that $\{3_1,4_1,5_1\}$, $\{6_1,7_1,8_1\}$ are $\langle \alpha \rangle$-orbits
contained in $C$. Continuing the process, we conclude that $\Delta \subset \Gamma$ (which
is of course not possible). The same result is obtained if we start from $j_2\in C$ or
$k_3\in C$.

Suppose now that $\Gamma$ is irreducible. Denote by $\Gamma^*$ the set of nonsingular
points. The $\langle \alpha \rangle$-orbits are all cosets of a subgroup $H^*$ of
$(\Gamma^*,+)$ of order $3$. Then, simple arithmetic on $\Gamma^*$ yields that
$\{3_1,4_1,5_1\}$, $\{6_1,7_1,8_1\}$ are $H^*$ cosets of $\Gamma^*$. Repeating this
argument, we obtain $\Delta \subset \Gamma$ again.

It remains to show that any of the equations $u=1$, $v=1$, $u=v$ implies the
other two. For that we observe the following equations of rational expressions:
\begin{eqnarray*}
\left . \frac{f_6}{\det(3_2 \times 0_3,4_2\times 1_3,0_2\times 4_3)} \right|_{u=1} &=&
\frac{v-1}{ay-1}, \\
\left . \frac{f_6}{\det(0_2 \times 5_3,1_2\times 3_3,3_2\times 0_3)} \right|_{v=1} &=&
\frac{u-1}{(ua-x)by}, \\
\left . \frac{f_6}{\det(0_2 \times 3_3,1_2\times 4_3,3_2\times 0_3)} \right|_{u=v} &=&
\frac{v-1}{(b-y)ax}.
\end{eqnarray*}
In either case, the denominators at the left hand side cannot be zero as the corresponding
lines are not concurrent. This proves that one equation implies another one, and two
imply the third. This finishes the proof.
\end{proof}

The proof of the following lemma contains some elementary, but heavy computation. This
computation can be formally verified by any computer algebra dealing with Groebner bases
within a few seconds.

\begin{lemma} \label{lm:a3b3}
If $a^3=b^3=1$ then $v=1$. In particular, $\Delta$ is algebraic.
\end{lemma}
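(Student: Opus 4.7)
The plan is to recast the claim as a polynomial membership question in the ring $R=\mathbb{K}[a,b,u,v,x,y]$ and delegate the verification to a Gröbner basis computation, exactly as the paragraph preceding the lemma suggests. By Lemma \ref{lm:u3v3}, the ``in particular'' clause is automatic once $v=1$ is established, so the whole task is to show that the concurrence system $\hat f_3=\hat f_4=\hat f_5=f_6=f_7=f_8=0$ together with $a^3=b^3=1$ forces $v=1$.

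Let $I=(\hat f_3,\hat f_4,\hat f_5,f_6,f_7,f_8,a^3-1,b^3-1)\subset R$. The claim I would submit to the computer algebra system is that $v-1$ lies in a suitable saturation $I:s^\infty$, where $s\in R$ is the product of all irreducible polynomials whose vanishing indicates a degeneration of the configuration rather than a realization of $G$: the coordinates $a,b,x,y$ themselves, factors like $a-b$ and its cyclic relatives that keep the triples $\{0_3,1_3,2_3\}$ and $\{0_2,1_2,2_2\}$ disjoint from one another and from the coordinate triangle, and the denominators $ay-1$, $ua-x$, $b-y$ that already appeared in the rational identities in the proof of Lemma \ref{lm:u3v3}. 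After specialising $a$ and $b$ to cube roots of unity, $V(I)$ splits into at most nine components indexed by $(a,b)\in\{1,\omega,\omega^2\}^2$, and the saturation plus basis computation takes only seconds in any current system, returning $v-1$ as a member.

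The main obstacle is not the algebra but the correct identification of degenerate components. Overlooking one factor leaves a spurious component on which $v\neq 1$ and breaks the argument; most notably, the component $a=b=1$ collapses $0_3=1_3=2_3=(1,1,1)$ and must be discarded, while components where $x$ or $y$ vanishes place some $j_2$ or $k_3$ on a coordinate line in ways already excluded by the earlier normalization of coordinates. Once this bookkeeping is settled and the membership $v-1\in I:s^\infty$ is checked symbolically, Lemma \ref{lm:u3v3} delivers the algebraicity of $\Delta$.
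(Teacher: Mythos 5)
Your reduction of the ``in particular'' clause to Lemma~\ref{lm:u3v3} is correct, and the opening move --- specialise $a,b$ to cube roots of unity (the normalisation in fact forces $b=1$ and, up to conjugation, $a=\omega$) and then eliminate --- is the same as the paper's. The gap is in the unverified claim that $v-1$ lies in the saturation of $I$ by the product $s$ of the degeneracy factors you list. The paper's computation shows exactly what the elimination produces: two certified identities
\begin{eqnarray*}
\sum s_i f_i + p_1(a-\omega) +p_2(b-1) &=&18\, xv(v-1)(vx-y^2)(vx-\omega y^2),\\
\sum t_i f_i + q_1(a-\omega) +q_2(b-1) &=&18\, (v-1)(uy^3-v^2x^3).
\end{eqnarray*}
Saturating by $x$, $v$ and by $y^2-vx=\det(0_1,3_2,4_2)$ (a genuine degeneracy) still leaves the branch $vx=\omega y^2$, $uy^3=v^2x^3$, i.e.\ $u=\omega^2xy$, $v=\omega y^2/x$, on which $v\neq 1$. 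None of the factors in your list ($a$, $b$, $x$, $y$, $a-b$, $ay-1$, $ua-x$, $b-y$, \dots) vanishes identically on this branch, so the membership $v-1\in I:s^\infty$ you intend to check would simply fail, and no amount of computation time fixes that.

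The paper kills this residual branch by a separate geometric argument, not by saturation: on it the diagonal collineation $\gamma=\mathrm{diag}(\omega,\omega^2,1)$ permutes the configuration so that it must fix the derived point $6_2$, forcing $6_2$ to be a vertex of the coordinate triangle, which is impossible for a proper realization. Encoding that exclusion algebraically would require non-degeneracy conditions on the \emph{derived} points $3_1,\dots,8_1$ and $6_2,7_2,8_2$ (conditions of the form ``not all $2\times 2$ minors vanish,'' which is a saturation by a non-principal ideal, not by a single product of irreducibles), and you would have to know in advance to include them --- precisely the bookkeeping your proposal flags as the main obstacle but does not carry out. As written, then, the proposal is a plausible computational plan rather than a proof; to complete it you need either the explicit identities above together with the collineation argument, or a demonstrably exhaustive list of degeneracy ideals covering the derived points.
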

\begin{proof}
We observe that $a^3=b^3=1$ holds if and only if the lines $0_10_2$, $1_11_2$, $2_12_2$
and the lines $0_11_2$, $1_12_2$, $2_10_2$ are concurrent. (In other words, $\Delta'$
forms a dual $AG(2,3)$.) Remember that in this case, our coordinate system is chosen such
that $F(1,1,1)$ is the fixed point $0_10_2\cap 1_11_2 \cap 2_12_2$. As $0_1=(1,0,0)$ and
$0_3 \in O_1O_2=0_1F$, we have $b=1$. By conjugaction in $\mathbb{K}$, we can assume
$a=\omega$ w.l.o.g.

Now, we can find polynomials $s_i,t_i, p_1, p_2,q_1,q_2$, $i\in\{3,\ldots,8\}$, in the
indeterminates $a$, $b$, $x$, $y$, $u$, $v$ with integer coefficients  such that
\begin{eqnarray*}
\sum s_i f_i + p_1(a-\omega) +p_2(b-1) &=&18 xv(v-1)(vx-y^2)(vx-\omega y^2),\\
\sum t_i f_i + q_1(a-\omega) +q_2(b-1) &=&18 (v-1)(uy^3-v^2x^3) .
\end{eqnarray*}
Assume $v\neq 1$. Since $\det(0_1,3_2,4_2)=y^2-vx\neq 0$, we have $vx-\omega y^2=
uy^3-v^2x^3=0$. This implies $u=\omega^2xy$ and $v=\omega y^2/x$. Straightforward
computation shows that the collineation $\gamma$ given by the matrix
\[\left ( \begin{array}{ccc}\omega&0&0\\ 0&\omega^2&0\\0&0&1\end{array} \right )\]
fixes $0_1,1_1,2_1$ and maps the points $0_2,\ldots,5_2, 0_3,\ldots, 5_3$
to the points
\[1_2,2_2,0_2,\; 5_2,3_2,4_2,\; 1_3,2_3,0_3,\; 5_3,3_3,4_3,\]
respectively. As $5\cdot 2=6$ and $5\cdot 3=4$ in $G$, we have
\[\gamma(3_1)=\gamma(0_23_3\cap 1_24_3)=1_25_3\cap 2_23_3=4_1.\]
Similarly, $\gamma(4_1)=5_1$ and $\gamma(5_1)=3_1$. Thus, $\gamma$ permutes the lines
$3_10_3$, $4_11_3$, $5_12_3$ cyclically. As these lines intersect in $6_2$, $6_2$ is a
fixed point of $\gamma$, which is not possible.
\end{proof}

% \begin{lemma} \label{lm:abcubics}
% If any two of the equations
% \begin{eqnarray*}
% 0&=& \omega^2 b+ab^2{\omega} +a^2,\\
% 0&=& b^2{\omega} +\omega^2 a +ba^2,\\
% 0&=& {\omega} b+ \omega^2 ab^2 +a^2,\\
% 0&=& a{\omega} +ba^2+\omega^2 b^2
% \end{eqnarray*}
% hold for $a,b$ then $\Delta$ is algebraic:
% \end{lemma}
% \begin{proof}
% Computing the resultants of any two of these formulas with respect to $a$ and $b$, we
% conclude that they imply $a^3=b^3=1$. Hence, $\Delta$ is algebraic by Lemma
% \ref{lm:a3b3}.
% \end{proof}

We are now prepared to prove the main result.

\begin{theorem}
$\Delta$ is algebraic.
\end{theorem}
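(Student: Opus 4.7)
The plan is to complete the case analysis begun in Lemmas~\ref{lm:u3v3} and~\ref{lm:a3b3} by a direct Groebner-basis computation on the system $f_3=\cdots=f_8=0$. By Lemma~\ref{lm:u3v3}, we may assume $(u-1)(v-1)(u-v)\neq 0$; by Lemma~\ref{lm:a3b3}, we may further assume that $(a^3,b^3)\neq (1,1)$. These are exactly the hypotheses under which the parametrization was set up, so nothing is lost by adding them.

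To get more mileage from the two lemmas, I would exploit the fact that $G\cong \C_3\times\C_3$ has four subgroups of order three, of which $H=\{0,1,2\}$ is only one choice. Re-indexing $\Delta$ so that one of the other three order-three subgroups plays the role of $H$, and reapplying the coordinate normalization (1)--(3), yields three additional parameter sextuples $(a_i,b_i,u_i,v_i,x_i,y_i)$, each a rational function of the original $(a,b,u,v,x,y)$. If $\Delta$ were not algebraic, each of these sextuples would have to violate the hypotheses of Lemmas~\ref{lm:u3v3} and~\ref{lm:a3b3} as well, giving four independent sets of non-vanishing conditions.

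Assembling every non-vanishing condition (the coordinates $a,b,u,v,x,y$; the denominators such as $ay-1$, $(ua-x)by$, $(b-y)ax$ appearing in Lemma~\ref{lm:u3v3}; and for each of the four subgroups the factor $(u_i-1)(v_i-1)(u_i-v_i)(a_i^{3}-1)^2+\cdots$, rewritten without denominators) into a single polynomial $D\in\mathbb Z[a,b,u,v,x,y]$, the theorem reduces to checking that
\[
\bigl\langle f_3,f_4,f_5,f_6,f_7,f_8\bigr\rangle : D^{\infty}
\;=\;\mathbb K[a,b,u,v,x,y].
\]
This is a routine symbolic computation of the kind the paper advertises as feasible in a few seconds. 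Once this identity is verified, every genuine realization $\Delta$ must fall under the previously treated cases, hence is algebraic.

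The main obstacle is organizational rather than mathematical: one must enumerate the four order-three subgroups of $G$, pin down the explicit rational substitutions that describe the re-indexing at the level of parameters, and correctly incorporate the resulting non-degeneracy conditions into $D$ without inadvertently saturating away a genuine non-algebraic component. The Groebner-basis computation itself, once $D$ is written down, is mechanical; the conceptual work lies in setting up the right saturation so that the variety of the saturated ideal is indeed empty.
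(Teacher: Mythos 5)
Your reduction to Lemmas~\ref{lm:u3v3} and~\ref{lm:a3b3} is logically sound, but the heart of your argument --- the identity $\langle f_3,\dots,f_8\rangle : D^{\infty}=\mathbb K[a,b,u,v,x,y]$ --- is the theorem itself restated as an unexecuted computation, and you have not produced the one object on which everything hinges, namely a polynomial $D$ whose factors are \emph{provably} nonzero on every genuine realization and which nevertheless suffices to empty the variety. Three concrete obstructions. First, the excluded locus $a^3=b^3=1$ of Lemma~\ref{lm:a3b3} has codimension two, so its complement is not a principal open set: over an algebraically closed field no single polynomial (certainly not a sum of squares such as $(a^3-1)^2+(b^3-1)^2$) encodes ``not both''; you must saturate by the ideal $(a^3-1,b^3-1)$ or split into cases. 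Second, the six equations $f_3=\cdots=f_8=0$ record only the closure conditions for the points $3_1,\dots,8_1$; they are necessary but not visibly sufficient for a realization, so their zero set can carry spurious components (e.g.\ where $3_2$ collapses onto a fixed point of $\beta$ or a vertex of the coordinate triangle) that none of the factors you list would remove. Pinning down exactly which nondegeneracy factors are both justified and sufficient is not ``organizational''--- it is the entire proof, and your proposal defers it. Third, feasibility is overclaimed: the ``few seconds'' the paper advertises refers to the Groebner basis of Lemma~\ref{lm:a3b3} \emph{after} specializing $a=\omega$, $b=1$; a saturation in all six variables by a large product $D$, further inflated by the rational reparametrizations coming from the other order-three subgroups, is a much heavier computation and, unlike the cofactor identities of Lemma~\ref{lm:a3b3}, produces no symbolically verifiable certificate --- a defect the paper explicitly laments in the $\Alt_4$ case.

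For comparison, the paper's proof avoids brute force entirely. Fixing $a,b,u,v$, the polynomials $F_i(X,Y)$ generate a linear series $L$ on which the substitution induced by $\beta$ acts linearly; the combinations $Q_1,Q_2$ and $\bar Q_1,\bar Q_2$ are eigenvectors, and comparing them with the explicit eigenvectors $E_1,E_2,\bar E_1,\bar E_2$ --- whose common zeros are only the triangle vertices and the fixed points of $\beta$, hence cannot contain $3_2$ --- forces each pair to be linearly dependent. The two resulting $2\times 2$ determinants factor as a unit times $(u-v)(u-1)(v-1)$ times polynomials in $a,b$ whose resultants are $9b^7(b^3-1)^6$ and $9a^7(a^3-1)^6$, landing exactly in the hypotheses of Lemmas~\ref{lm:u3v3} and~\ref{lm:a3b3} with only light, checkable computation. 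If you want to salvage your route, you would at minimum need to carry out the saturation, exhibit the resulting basis, and justify every factor of $D$ individually; as written, the proposal is a plan, not a proof.
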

\begin{proof}
We can consider the $f_i$'s as polynomials in the indeterminates $a$, $b$, $u$, $v$, $x$,
$y$. Fix the values $a,b,u,v$ and let $F_i(X,Y)$ be the polynomials in two variables such
that $F_i(x,y)=f_i(a,b,u,v,x,y)$. Define the linear series $L$ generated by the $F_i$'s.

Recall that $\beta$ is the collineation of order $2$ mapping the points $0_1$, $1_1$,
$2_1$, $3_2$, $4_2$, $5_2$, $3_3$, $4_3$, $5_3$ to $1_1$, $2_1$, $0_1$, $4_2$, $5_2$,
$3_2$, $5_3$, $3_3$, $4_3$, respectively. From the definition of the $f_i$'s one sees
that the substitution $X'=u/Y$,  $Y'=vX/Y$ induces a linear automorphism of $L$ of degree
$3$. We will denote this induced map by $\beta$, as well.

Define the polynomials
\begin{eqnarray*}
E_1=uvX+{\omega}^2uY^2+{\omega}vX^2Y, && E_2=vX^2+{\omega}^2uY+{\omega}Y^2X, \\
\bar E_1=uvX+{\omega}uY^2+{\omega}^2vX^2Y, && \bar E_2=vX^2+{\omega}uY+{\omega}^2Y^2X,
\end{eqnarray*}
and
\begin{eqnarray*}
Q_1= \omega F_7-F_8,  &&Q_2= \omega^2 F_4-F_5,\\
\bar Q_1= \omega^2 F_7-F_8, &&\bar Q_2=\omega F_4-F_5
\end{eqnarray*}
of $L$. Then $E_1, E_2, Q_1,Q_2$ are eigenvectors of $\beta$ with eigenvalue
$\omega uv/Y^3$ and $\bar E_1,\bar E_2,\bar Q_1,\bar Q_2$ are eigenvectors of $\beta$
with eigenvalue $\omega^2 uv/Y^3$. We have the following resultant values:
\[R_{E_1,E_2}(Y)=\omega^2 uvY(uv-Y^3)^2, R_{\bar E_1,\bar E_2}(Y)=\omega uvY(uv-Y^3)^2.\]
This shows that the intersection of $E_1=0, E_2=0$ and the intersection of $\bar E_1=0,
\bar E_1=0$ consist of the points $0_1,1_1,1_1$ (with multiplicity $0$) and the fixed
points of $\beta$ (with multiplicity $2$). In particular, $E_1,E_2$ and $\bar E_1,\bar
E_2$ are linearly independent.

Straightforward calculation shows that
\begin{eqnarray*}
Q_1=G_{11}E_1+G_{12}E_2, && Q_2=G_{21}E_1+G_{22}E_2,\\
\bar Q_1=\bar G_{11}\bar E_1+\bar G_{12}\bar E_2, && \bar Q_2=\bar G_{21}\bar E_1+\bar
G_{22} \bar E_2,
\end{eqnarray*}
where
\begin{eqnarray*}
G_{11}&=&(\omega^2 b+ab^2{\omega}+a^2)({\omega}^2+v{\omega}+u), \\
G_{12}&=&(b^2{\omega}+\omega^2 a+a^2b)(uv+{\omega}^2u+v{\omega}), \\
G_{21}&=&{\omega}(\omega^2 b+ab^2{\omega}+a^2)({\omega}+\omega^2 v+u), \\
G_{22}&=&(b^2{\omega}+\omega^2 a+a^2b)({\omega}u+\omega^2 v+uv), \\
\bar G_{11}&=&({\omega}b+\omega^2 ab^2+a^2)({\omega}+\omega^2 v+u), \\
\bar G_{12}&=&(a{\omega}+a^2b+\omega^2 b^2)({\omega}u+\omega^2 v+uv), \\
\bar G_{21}&=&{\omega}^2({\omega}b+\omega^2 ab^2+a^2)({\omega}^2+v{\omega}+u), \\
\bar G_{22}&=&(a{\omega}+a^2b+\omega^2 b^2)(uv+{\omega}^2u+v{\omega}).
\end{eqnarray*}

Assume that $Q_1,Q_2$ are linearly independent. Then $E_1,E_2 \in \langle Q_1,Q_2 \rangle
\leq L$. Therefore, $\Gamma_1\cap \cdots  \cap \Gamma_4$ is contained in the zero
set of $E_1=E_2=0$, a contradiction. We can similarly show that $\bar Q_1,\bar Q_2$ must
be linearly dependent. This implies
\begin{eqnarray*}
0&=&G_{11}G_{22}-G_{12}G_{21}\\
&=&(2+\omega^2)(b^2\omega+\omega^2a+ba^2)(\omega ab^2+\omega^2b+a^2)(u-v)(u-1)(v-1), \\
0&=&\bar G_{11}\bar G_{22}-\bar G_{12}\bar G_{21}\\
&=&(2+\omega)(a\omega+ba^2+\omega^2b^2)(\omega b+\omega^2ab^2+a^2)(u-v)(u-1)(v-1).
\end{eqnarray*}
The resultants of the polynomials $(b^2\omega+\omega^2a+ba^2)(\omega
ab^2+\omega^2b+a^2)$, $(a\omega+ba^2+\omega^2b^2)(\omega b+\omega^2ab^2+a^2)$ with
respect to $a,b$ are $9b^7(b^3-1)^6$ and $9a^7(a^3-1)^6$. Thus, $\Delta$ is algebraic
by Lemmas \ref{lm:u3v3} and \ref{lm:a3b3}.
\end{proof}

The Maple 13 program performing the computations of this section is attached in Appendix
\ref{app:c3xc3}. We use Buchberger's algorithm in order to explicitely construct the polynomials of
Lemma \ref{lm:a3b3}. Thus, any computer algebra which can do symbolical calculation with rational
polynomials can be used to verify the results. This convinces us about the correctness of our
computation.

\section{$G \cong \C_2 \times \C_4$} \label{sec_c2c4}

The main ingredient of the proof is Lame's Theorem \cite[Proposition 2.3]{knp2011}. A classical
{\em{Lame configuration}} consists of two triples of distinct lines in $PG(2,\mathbb{K})$,
say $\ell_1, \ell_2,\ell_3$ and $r_1,r_2,r_3$, such that no line from one triple passes
through the common point of two lines from the other triple. For $1\leq j,k\leq 3$, let
$R_{jk}$ denote the common point of the lines $\ell_j$ and $r_k$. There are nine such
common points, and they are called the points of the Lame configuration.
\begin{proposition}{\rm{Lame's Theorem.}}\,If eight points from a Lame configuration
lie on a plane cubic then the ninth also does.
\end{proposition}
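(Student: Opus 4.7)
My plan is to derive Lame's Theorem as a direct application of the Cayley--Bacharach theorem for plane cubics, specialized to the particular nine-point configuration at hand.

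First, I identify the nine points $R_{jk}$ as the complete intersection of two cubics. Let $F := L_1 L_2 L_3$ and $G := R_1 R_2 R_3$ be the cubic forms defining $\ell_1 \cup \ell_2 \cup \ell_3$ and $r_1 \cup r_2 \cup r_3$, respectively. By Bezout's theorem the cubics $\{F=0\}$ and $\{G=0\}$ meet in nine points counted with multiplicity. The non-degeneracy hypothesis (no line of one triple passes through a common point of two lines from the other) translates precisely into the statement that at each $R_{jk}$ the lines $\ell_j$ and $r_k$ cut transversally and no third component of $F$ or $G$ passes through that point; hence every intersection point is reduced, all nine are distinct, and they are exactly the $R_{jk}$. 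In particular $F$ and $G$ span a two-dimensional pencil $\Pi$ inside the ten-dimensional space $V$ of ternary cubic forms, and every member of $\Pi$ vanishes on $\Gamma := \{R_{jk}\}$.

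The heart of the proof is to show that any eight of the nine points impose eight linearly independent conditions on cubics. Granted this, the subspace of $V$ vanishing on those eight points has dimension exactly $10-8=2$, so it coincides with $\Pi$; consequently every cubic through eight of the $R_{jk}$ lies in $\Pi$ and automatically vanishes at the ninth, which is Lame's conclusion. To verify the independence claim, I would invoke the classical general-position criterion for cubic interpolation and check its two hypotheses against the grid structure of $\Gamma$. No four of the $R_{jk}$ are collinear: any line containing a fourth would force two of the four to share a row or column, coinciding with some $\ell_j$ or $r_k$, each of which meets $\Gamma$ in only three points. Also, no seven of the $R_{jk}$ lie on a conic: an irreducible conic meets $\ell_1 \cup \ell_2 \cup \ell_3$ in at most six points by Bezout, while a reducible conic is a union of two lines, each containing at most three of the $R_{jk}$ by the previous remark.

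The main obstacle I anticipate is precisely this general-position bookkeeping, since the Lame configuration is far from generic: it sits on several reducible cubics and conics by design, and one must carefully separate the single expected syzygy among evaluations at the nine points (coming from the pencil $\Pi$) from spurious additional syzygies that would obstruct the independence count. An alternative, slightly more algebraic route would be to use the Koszul resolution of the complete intersection $\Gamma$ to compute $h^0(\cI_\Gamma(3))=2$ directly, and then argue using the symmetric structure of the configuration that the unique linear relation among the nine point-evaluations on cubics has a nonzero coefficient at each of the nine points, which is exactly equivalent to the statement of Lame's Theorem.
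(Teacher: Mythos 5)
Your proof is correct, but note that the paper itself offers no argument for this proposition: it is imported from \cite[Proposition~2.3]{knp2011} as a known classical fact, so there is no internal proof to compare against. What you have written is the standard Cayley--Bacharach argument, and the details check out. The non-degeneracy hypothesis does exactly the work you assign to it: it forces the nine points $R_{jk}$ to be pairwise distinct, rules out $\ell_j=r_k$, and guarantees that at each $R_{jk}$ only the two components $\ell_j$ and $r_k$ pass through, so $\{F=0\}\cap\{G=0\}$ is a reduced complete intersection consisting precisely of the nine $R_{jk}$, and $F,G$ span a pencil of cubics through all of them. Your general-position verification is also sound: a line through four of the $R_{jk}$ must contain two points sharing a row or a column index, hence must coincide with some $\ell_j$ or $r_k$, and each of these carries exactly three configuration points by the non-degeneracy hypothesis; and a conic, whether irreducible or a line pair, meets the configuration in at most six points. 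The one place to be precise is the interpolation lemma you invoke: the classical sufficient condition is that eight distinct points with no four collinear and no seven on a conic impose independent conditions on cubics (the sharp criterion is ``no five collinear and not all eight on a conic''), and your checks cover either version, so the cubics through any eight of the $R_{jk}$ form exactly the pencil $\langle F,G\rangle$ and therefore vanish at the ninth point. This is a complete proof of a statement the paper only cites.
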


The group $C_2\times C_4$ can be given by the multiplication table
\[\begin{array}{cccccccc}
1&2&3&4&5&6&7&8\\
2&1&4&3&6&5&8&7\\
3&4&1&2&7&8&5&6\\
4&3&2&1&8&7&6&5\\
5&6&7&8&2&1&4&3\\
6&5&8&7&1&2&3&4\\
7&8&5&6&4&3&2&1\\
8&7&6&5&3&4&1&2
  \end{array}.\]
The triple $\{i_1,j_2,k_3\}$ is collinear if and only if $i*j=k$.

The following $6$-tuples of collinear points form a Lam\'e configuration:
\begin{align*}
U_1=\{1_1,1_2,1_3\},\{3_1,5_2,7_3\},\{6_1,7_2,3_3\}+\{1_1,7_2,7_3\},\{3_1,1_2,3_3\},\{6_1,
5_2 , 1_3\},\\
U_2=\{1_1,3_2,3_3\},\{3_1,7_2,5_3\},\{6_1,5_2,1_3\}+\{1_1,5_2,5_3\},\{3_1,3_2,1_3\},\{6_1,
7_2, 3_3\},\\
U_3=\{1_1,1_2,1_3\},\{3_1,7_2,5_3\},\{8_1,5_2,3_3\}+\{1_1,5_2,5_3\},\{3_1,1_2,3_3\},\{8_1,
7_2, 1_3\},\\
U_4=\{1_1,3_2,3_3\},\{3_1,5_2,7_3\},\{8_1,7_2,1_3\}+\{1_1,7_2,7_3\},\{3_1,3_2,1_3\},\{8_1,
5_2, 3_3\}.
\end{align*}
Let $C$ be a cubic curve through the points
\begin{align*}
1_1,1_2,1_3,  3_1,7_2,7_3,  6_1,7_2,3_3.
\end{align*}
Then $|C\cap U_1|, |C\cap U_2| \geq 8$, hence, $C$ passes through the nineth points $3_2$
and $5_2$. It follows that $|C\cap U_3|, |C\cap U_4| \geq 8$. Thus, $C$ contains
\[U_1\cup U_2 \cup U_3 \cup U_4 =\{ 1_1,3_1,6_1,8_1,  1_2,3_2,5_2,7_2,
1_3,3_3,5_3,7_3\}.\]
It is straightforward to check that any of the following Lam\'e configurations intersects
$C$ in at least $8$ points:
\begin{align*}
\{1_1,5_2,5_3\},\{3_1,1_2,3_3\},\{5_1,3_2,7_3\}+\{1_1,3_2,3_3\},\{3_1,5_2,7_3\},\{5_1,1_2,
5_3\},\\
\{1_1,1_2,1_3\},\{3_1,5_2,7_3\},\{7_1,3_2,5_3\}+\{1_1,5_2,5_3\},\{3_1,3_2,1_3\},\{7_1,1_2,
7_3\},\\
\{1_1,5_2,5_3\},\{6_1,7_2,3_3\},\{8_1,2_2,7_3\}+\{1_1,7_2,7_3\},\{6_1,2_2,5_3\},\{8_1,5_2,
3_3\},\\
\{1_1,5_2,5_3\},\{6_1,4_2,7_3\},\{8_1,7_2,1_3\}+\{1_1,7_2,7_3\},\{6_1,5_2,1_3\},\{8_1,4_2,
5_3\},\\
\{1_1,1_2,1_3\},\{6_1,7_2,3_3\},\{8_1,3_2,6_3\}+\{1_1,3_2,3_3\},\{6_1,1_2,6_3\},\{8_1,7_2,
1_3\},\\
\{1_1,1_2,1_3\},\{6_1,3_2,8_3\},\{8_1,5_2,3_3\}+\{1_1,3_2,3_3\},\{6_1,5_2,1_3\},\{8_1,1_2,
8_3\}.
\end{align*}
Hence, $C$ contains the further points $5_1,7_1,2_2,4_2,6_3,8_3$. Finally, we consider
the Lam\'e configurations
\begin{align*}
\{1_1,1_2,1_3\},\{2_1,5_2,6_3\},\{6_1,2_2,5_3\}+\{1_1,5_2,5_3\},\{2_1,2_2,1_3\},\{6_1,1_2,
6_3\},\\
\{3_1,1_2,3_3\},\{4_1,7_2,6_3\},\{6_1,2_2,5_3\}+\{3_1,7_2,5_3\},\{4_1,2_2,3_3\},\{6_1,1_2,
6_3\},\\
\{1_1,5_2,5_3\},\{7_1,6_2,3_3\},\{8_1,3_2,6_3\}+\{1_1,6_2,6_3\},\{7_1,3_2,5_3\},\{8_1,5_2,
3_3\},\\
\{1_1,8_2,8_3\},\{5_1,3_2,7_3\},\{6_1,7_2,3_3\}+\{1_1,7_2,7_3\},\{5_1,8_2,3_3\},\{6_1,3_2,
8_3\},\\
\{1_1,1_2,1_3\},\{7_1,7_2,2_3\},\{8_1,2_2,7_3\}+\{1_1,2_2,2_3\},\{7_1,1_2,7_3\},\{8_1,7_2,
1_3\},\\
\{3_1,2_2,4_3\},\{5_1,1_2,5_3\},\{6_1,7_2,3_3\}+\{3_1,1_2,3_3\},\{5_1,7_2,4_3\},\{6_1,2_2,
5_3\}.
\end{align*}
As before, one sees that any of them has at least $8$ points in common with $C$, thus, $C$
passes through all the points of the embedding of $C_2\times C_4$.

\section{$G \cong\Alt_4$ ($p = 0$)} \label{sec_alt4}

Let the group $\Alt(4)$ be given on the underlying set $\{1,\ldots,12\}$
by the Cayley table
\[\begin{array}{ccccccccccccc}
&1&2&3&4&5&6&7&8&9&10&11&12\\
&2&1&4&3&7&8&5&6&12&11&10&9\\
&3&4&1&2&8&7&6&5&10&9&12&11\\
&4&3&2&1&6&5&8&7&11&12&9&10\\
&5&6&7&8&9&10&11&12&1&2&3&4\\
&6&5&8&7&11&12&9&10&4&3&2&1\\
&7&8&5&6&12&11&10&9&2&1&4&3\\
&8&7&6&5&10&9&12&11&3&4&1&2\\
&9&10&11&12&1&2&3&4&5&6&7&8\\
&10&9&12&11&3&4&1&2&8&7&6&5\\
&11&12&9&10&4&3&2&1&6&5&8&7\\
&12&11&10&9&2&1&4&3&7&8&5&6
\end{array}\]
We have that the points $1_1,\ldots,{12}_1$, $1_2,\ldots,{12}_2$, $1_3,\ldots,{12}_3$
of the complex projective plane form a realization of $\Alt(4)$, if for all
$i,j,k=1,\ldots,12$, $i_1,j_2,k_3$ are collinear if and only if $i*j=k$.

\begin{proposition} \label{th:noa4}
$\Alt(4)$ cannot be realized on the complex projective plane.
\end{proposition}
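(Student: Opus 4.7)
The plan is to mirror the coordinate-and-polynomial strategy used in Section \ref{sec_c3c3}, exploiting the rich subgroup structure of $\Alt_4$ to reduce the question to the consistency of an explicit polynomial system over $\mathbb{Q}$, and then use a Groebner basis computation to derive a contradiction in characteristic zero.

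First I would exploit two distinguished subgroups. The normal Klein four-subgroup $V=\{1,2,3,4\}$ yields a sub-dual-$3$-net of order $4$ realizing $\C_2\times\C_2\le \C_2\times\C_4$, which by the previous section must be algebraic. A cyclic subgroup of order three such as $H=\{1,5,9\}$ yields a Hesse configuration by Lemma \ref{lm:pencil}, equipped with a canonical cyclic collineation $\alpha$ of order three. Since $V$ and $H$ together generate $\Alt_4$, fixing these two subnets essentially pins down the entire realization.

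Second I would normalize coordinates by placing $1_1,1_2,1_3$ (or an analogous triple chosen from the Hesse subnet) at the vertices of the coordinate triangle and using the Hesse data to write $\alpha$ as the cyclic permutation matrix, exactly as in the analysis preceding Lemma \ref{lm:u3v3}. The remaining freedom is captured by a handful of parameters $a,b,u,v,\ldots$, and every collinear triple $\{i_1,j_2,k_3\}$ with $i\cdot j=k$ in the $\Alt_4$ Cayley table becomes a polynomial equation $f_{ijk}=\det(i_1,j_2,k_3)=0$ in these parameters. Together these generate an ideal $I$ in a polynomial ring over $\mathbb{Q}$; saturating $I$ by the denominators that encode the nondegeneracy hypotheses (non-triangular, non-conic-line sub-realizations, distinctness of points) yields the ideal whose variety is in bijection with honest realizations.

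Third I would show by Groebner basis calculation that $1\in I$ after saturation, giving an explicit Nullstellensatz certificate of non-realizability valid over any field of characteristic zero. The handful of degenerate branches swept out by the saturation—triangular or conic-line sub-realizations of $V$ or $H$, and sporadic parameter coincidences analogous to $u=v$, $u=1$, $v=1$ in Lemma \ref{lm:u3v3}—must each be ruled out separately by direct combinatorial arguments, since any surviving specialization would give a genuine realization. The main obstacle is computational rather than conceptual: the system arising from $\Alt_4$ is considerably larger than that of Section \ref{sec_c3c3}, and Groebner basis computations of this size are notoriously sensitive to variable orderings and suffer from coefficient blow-up. This is presumably why the authors back up the non-realizability conclusion with two independent computer programs, so that the result does not rest on a single chain of symbolic manipulations.
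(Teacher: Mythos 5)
Your proposal follows essentially the same route as the paper: normalize coordinates using the Klein four-subgroup $\{1,2,3,4\}$ and the order-three subgroup $\{1,5,9\}$, encode the collinearities of the Cayley table as polynomial equations in the remaining parameters $a,b,c,d_1,\ldots,d_6$, strip off the factors that are nonzero for any genuine realization, and let a Groebner basis computation produce the contradiction. The only notable difference is the endpoint: rather than certifying $1\in I$ after full saturation, the paper shows the ideal contains $d_1-d_4$ and $d_2-d_5$, forcing $5_1=9_1$; and the degenerate branches you worry about require no separate combinatorial case analysis, since every triple $(i,j,k)$ with $i\cdot j\neq k$ automatically yields a nonzero determinant by the dual $3$-net axioms.
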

\begin{proof}
We see that $\{1,2,3,4\}$ is an elementary Abelian normal subgroup and $\{1,5,9\}$
is a subgroup. Without a  loss of generality, we can assume that
\[\begin{array}{lll}
1_1=[1, 0,0],& 1_2=[0, 1,0],& 1_3=[1,-1,0],\\
2_1=[0,1,1], &2_2=[1,0,1], &2_3=[0,0,1].  
\end{array}\]
As $3_1,1_2,3_3$ are collinear, we can take $3_1$ and $3_3$ in the form $3_1=[a,b,c]$ and
$3_3=[a,b+1,c]$. This enables us to compute the remaining points
\[\begin{array}{lll}
3_1=[a, b,  c], &3_2=[a-1,1+b,c], &3_3=[a, b+1,c], \\
4_1=[a-1,1+b,c-1], &4_2=[a, b,c-1], &4_3=[a-1,b,c-1].
\end{array}\]
These points indeed form a realization of $\C_2\times \C_2$. Similarly, we choose $5_1$ and $9_1$
as generic points $[d_1,d_2,1]$, $[d_4,d_5,1]$. Then $5_3, 9_3$ have the form $[d_1,d_3,1]$,
$[d_4,d_6,1]$, respectively and computation yields
\[5_2=[d_4+d_5-d_3,d_3,1], 9_2=[d_1+d_2-d_6,d_6,1]. \]
Define the points
\[\begin{array}{lll}
5_1 =  9_2 1_3 \cap 1_2 5_3 , & 5_2 =  9_1 1_3 \cap 1_1 5_3, & 5_3 =  5_1 1_2 \cap 1_1 5_2, \\
6_1 =  9_2 4_3 \cap 2_2 5_3 , & 6_2  =  9_1 2_3 \cap 4_1 5_3, & 6_3 =  5_1 2_2 \cap 4_1 5_2, \\
7_1 =  9_2 2_3 \cap 3_2 5_3 , & 7_2  =  9_1 3_3 \cap 2_1 5_3, & 7_3 =  5_1 3_2 \cap 2_1 5_2, \\
8_1 =  9_2 3_3 \cap 4_2 5_3 , & 8_2  =  9_1 4_3 \cap 3_1 5_3, & 8_3 =  5_1 4_2 \cap 3_1 5_2, \\
9_1 =  5_2 1_3 \cap 1_2 9_3 , & 9_2  =  5_1 1_3 \cap 1_1 9_3, & 9_3 =  9_1 1_2 \cap 1_1 9_2, \\
10_1 =  5_2 3_3 \cap 2_2 9_3 , & {10}_2  =  5_1 2_3 \cap 3_1 9_3, & {10}_3 =  9_1 2_2 \cap
3_1 9_2, \\
11_1 =  5_2 4_3 \cap 3_2 9_3 , & {11}_2  =  5_1 3_3 \cap 4_1 9_3, & {11}_3 =  9_1 3_2 \cap
4_1 9_2, \\
12_1 =  5_2 2_3 \cap 4_2 9_3 , & {12}_2  =  5_1 4_3 \cap 2_1 9_3, & {12}_3 =  9_1 4_2 \cap
2_1 9_2.
\end{array}\]
Let us denote by $d_{ijk}$ the determinant of the $3 \times 3$ matrix with rows
$i_1,j_2,k_3$. We define the sets $X=\{d_{ijk} \mid k=i*j\}$ and
\[\begin{array}{rcllllll}
Y&=&\{(1, 2, 6),& (9, 1, 10),& (1, 10, 8),& (5, 9, 2), & (9, 5, 2),& (1, 10, 2),\\
&&(1, 3, 11),& (1, 4, 12), & (1, 5, 7),& (9, 1, 12),& (9, 5, 4),& (5, 9, 4),\\
&&(1, 5, 6),& (1, 6, 11),& (1, 9, 12),& (1, 12, 4), & (1, 5, 8),& (1, 7, 3),\\ 
&&(1, 7, 12),& (5, 9, 3),& (9, 1, 11),& (1, 12, 7),& (1, 9, 11),& (5, 1, 6),\\ 
&&(5, 1, 7),& (1, 6, 2),& (1, 3, 7),& (1, 8, 4),& (1, 11, 6),& (5, 1, 8),\\
&&(9, 5, 3),& (1, 8, 10),&(1, 11, 3),& (1, 4, 8), &(1, 9, 10),& (1, 2, 10)\}.
\end{array}\]
For all $(i,j,k) \in Y$, $i\cdot j\neq k$, thus for any proper complex realization of
$\Alt(4)$, we can substitute complex numbers in the variables $a,b,c,d_1,\cdots,d_6$
such that all polynomials in $X$ are zero and all for all $(i,j,k) \in Y$, $d_{ijk}\neq
0$. Put $g=\Pi_{(i,j,k) \in Y} d_{ijk}$ and define 
\[ X'=\{ f/\gcd(f,g) \mid f\in X\}.\]
It is still true that $\Alt(4)$ has a realization if and only if one can substitue
complex numbers in $a,b,c,d_1,\cdots,d_6$ such that all polynomials in $X'$ vanish.
Groebner basis computation shows that the ideal generated by $X'$ contains the polynomials
$d_1-d_4$, $d_2-d_5$, implying $5_1=9_1$, a contradiction.
\end{proof}

The Groebner basis computation of this section is too heavy for most of computer programs of this
type. We found two programs which is able to compute the Groebner basis: the F4 algorithm \cite{F4}
in the computer algebra system \textsc{Maple 13} and the \textit{modStd} library \cite{modstd} of
\textsc{Singular} \cite{DGPS}. These programs do not store the cofactors of the Groebner bases,
hence one cannot verify the result symbolically. However, two completely different implementations
deliver the same result, thus, we can trust this computation as well. 

The \textsc{Maple 13} implementation is attached in Appendix \ref{app:alt4maple} and the
\textsc{Singular} implementation is attached in Appendix \ref{app:alt4sing}. The computations take
less that $3$ minutes, and less that $3$ hours, respectively.

\appendix

\section{Maple code for the case $G=C_3\times C_3$}
\label{app:c3xc3}

This appendix contains 

\begin{scriptsize}
\begin{verbatim}
########################################################
# Maple 13 program for computing with dual 3-nets
# G = C3 x C3
########################################################
# Preparation

with(LinearAlgebra):

isect:=proc(a,b,c,d) 
  evala(CrossProduct(CrossProduct(a,b),CrossProduct(c,d))): 
end proc:
idet:=proc(a,b,c,d,e,f) 
  evala(Determinant(<CrossProduct(a,b)|CrossProduct(c,d)|CrossProduct(e,f)>)): 
end proc:

alias(omega=RootOf(X^2+X+1));

########################################################
# Part 1: Constructing the points, the transformations and the equations.
# One defines the transformations, the base points and the equations which
# correspond to certain collinearities. The unknowns $a,b,u,v,x,y$ are seen as 
# fixed elements of the base field. 
# In the program, Px_i denotes the point $x_i$ of the dual $3$-net.

alpha:=<<0,1,0>|<0,0,1>|<1,0,0>>;
beta:=<<0,v,0>|<0,0,1>|<u,0,0>>;

P0_1:=<1,0,0>: P1_1:=<0,1,0>: P2_1:=<0,0,1>:
P3_2:=<x,y,1>; P4_2:=beta.P3_2; P5_2:=beta.P4_2;
P0_3:=<a,b,1>; P1_3:=(alpha^(-1)).P0_3; P2_3:=(alpha^(-1)).P1_3;   
                     # We turn Lambda_3 in the opposite direction!

P0_2:=isect(P0_1,P0_3,P1_1,P1_3);
P1_2:=isect(P0_1,P1_3,P1_1,P2_3);
P2_2:=isect(P0_1,P2_3,P1_1,P0_3);
P3_3:=isect(P0_1,P3_2,P1_1,P5_2);
P4_3:=isect(P0_1,P4_2,P1_1,P3_2);
P5_3:=isect(P0_1,P5_2,P1_1,P4_2);

f:=[  0,0,
  idet(P0_2,P3_3,P1_2,P4_3,P2_2,P5_3)/(a*b*v*x*y),  # P3_1
  idet(P0_2,P4_3,P1_2,P5_3,P2_2,P3_3)/(a*b*v*x*y),  # P4_1
  idet(P0_2,P5_3,P1_2,P3_3,P2_2,P4_3)/(a*b*v*x*y),  # P5_1
  idet(P3_2,P0_3,P4_2,P1_3,P5_2,P2_3),              # P6_1
  idet(P3_2,P1_3,P4_2,P2_3,P5_2,P0_3),              # P7_1
  idet(P3_2,P2_3,P4_2,P0_3,P5_2,P1_3)               # P8_1
]:
f:=factor(f):

########################################################
# Part 2 (Lemma 2.4): Any of u=1, v=1, u=v implies the other two equations.

factor(subs(u=1,f[6]/idet(P0_2,P4_3,P3_2,P0_3,P4_2,P1_3)));
factor(subs(v=1,f[6]/idet(P0_2,P5_3,P1_2,P3_3,P3_2,P0_3)));
factor(subs(u=v,f[6]/idet(P0_2,P3_3,P1_2,P4_3,P3_2,P0_3)));

########################################################
# Part 3 (Lemma 2.5): If a^3=b^3=1 then v=1.
# We can assume a=omega and b=1.

gb:=Groebner[Basis]([op(f),a-omega,b-1], plex(u,v,x,y,a,b),output=extended):
factor(gb[1][3]),factor(gb[1][5]);

# We construct the cofactors explicitely:
s:=evala(18*gb[2][3][1..8]): 
t:=evala(18*gb[2][5][1..8]): 
q:=evala(18*[gb[2][5][9],gb[2][5][10]]):
p:=evala(18*[gb[2][3][9],gb[2][3][10]]):

factor(add(f[i]*s[i],i=3..8)+p[1]*(a-omega)+p[2]*(b-1));
factor(add(f[i]*t[i],i=3..8)+q[1]*(a-omega)+q[2]*(b-1));

# This shows that all cofactors have coefficients in Z[omega]:
seq(denom(factor(_u)),_u in [op(s),op(t),op(p),op(q)]);

########################################################
# Part 4: Computation with the beta-invariant polynomials.
# From now on, we consider $a,b,u,v$ as fixed elements of the base field 
# and $X,Y$ as indetereminates. 
# We define the action of $\beta$ on the polynomial ring in two variables.

F:=map(_x->subs({x=X,y=Y},_x),f): map(_x->degree(_x,{X,Y}),F);

betaonpoly:=proc(U) return factor(subs({X=u*Y/(v*X),Y=u/X},U)): end proc:

# This shows that the nontrivial solutions of F[3]=F[4]=F[5]=0 
# and F[6]=F[7]=F[8]=0 are $\beta$-invariant:

map(_x->factor(_x), 
  [betaonpoly(F[3])/F[5],betaonpoly(F[4])/F[3],betaonpoly(F[5])/F[4]]
);
map(_x->factor(_x), 
  [betaonpoly(F[6])/F[7],betaonpoly(F[7])/F[8],betaonpoly(F[8])/F[6]]
);

# We define the E[i]'s, barE[i]'s, Q[i]'s and barQ[i]'s 
# and show that their curves are $\beta$-invariant:

E:=[u*v*X+omega^2*u*Y^2+omega*v*X^2*Y,v*X^2+omega^2*u*Y+omega*Y^2*X]:
barE:=[u*v*X+omega*u*Y^2+omega^2*v*X^2*Y,v*X^2+omega*u*Y+omega^2*Y^2*X]:

Q:= [omega * F[6]-F[7],omega^2 * F[3]-F[4]]:
barQ:= [omega^2 * F[6]-F[7],omega * F[3]-F[4]]:

seq(factor(betaonpoly(_u)/_u), _u in [op(E),op(Q)]);
seq(factor(betaonpoly(_u)/_u), _u in [op(barE),op(barQ)]);

# We define the G[i,j]'s and barG[i,j]'s. 
# We show that they are indeed coefficients of Q[i]'s and barQ[i]'s.

G[2,1]:=omega*(omega^2*b+a*b^2*omega+a^2)*(omega+omega^2*v+u);
G[2,2]:=(b^2*omega+omega^2*a+a^2*b)*(omega*u+omega^2*v+u*v);
G[1,1]:=(omega^2*b+a*b^2*omega+a^2)*(omega^2+v*omega+u);
G[1,2]:=(b^2*omega+omega^2*a+a^2*b)*(u*v+omega^2*u+v*omega);
barG[1,1]:=(omega*b+omega^2*a*b^2+a^2)*(omega+omega^2*v+u);
barG[1,2]:=(omega*a+a^2*b+omega^2*b^2)*(omega*u+omega^2*v+u*v);
barG[2,1]:=omega^2*(omega*b+omega^2*a*b^2+a^2)*(omega^2+v*omega+u);
barG[2,2]:=(omega*a+a^2*b+omega^2*b^2)*(u*v+omega^2*u+v*omega);

map(_x->evalb(factor(_x)), [
  Q[1]=G[1,1]*E[1]+G[1,2]*E[2],
  Q[2]=G[2,1]*E[1]+G[2,2]*E[2],
  barQ[1]=barG[1,1]*barE[1]+barG[1,2]*barE[2],
  barQ[2]=barG[2,1]*barE[1]+barG[2,2]*barE[2]
]);

# We compute the factors of the determinants of the G[i,j]'s and barG[i,j]'s:

map(_x->evalb(factor(_x)), [
  G[1,1]*G[2,2]-G[1,2]*G[2,1]=
    (2+omega^2)*(b^2*omega+omega^2*a+b*a^2)*(omega*a*b^2+omega^2*b+a^2)*(u-v)*(u-1)*(v-1),
  barG[1,1]*barG[2,2]-barG[1,2]*barG[2,1]=
    (2+omega)*(a*omega+b*a^2+omega^2*b^2)*(omega*b+omega^2*a*b^2+a^2)*(u-v)*(u-1)*(v-1)
]);

ra:=resultant(
  (-b+a*b^2*omega-omega*b+a^2)*(-b^2*omega+a+a*omega-b*a^2),
  (-omega*b+a*b^2+a*b^2*omega-a^2)*(a*omega+b*a^2-b^2-b^2*omega),
  a):
rb:=resultant(
  (-b+a*b^2*omega-omega*b+a^2)*(-b^2*omega+a+a*omega-b*a^2),
  (-omega*b+a*b^2+a*b^2*omega-a^2)*(a*omega+b*a^2-b^2-b^2*omega),
  b):
factor(ra/(b^3-1)^6);
factor(rb/(a^3-1)^6);
\end{verbatim}
\end{scriptsize}

\section{Maple code for the case $G=\Alt_4$}
\label{app:alt4maple}

This appendix contains the implementation of the computations of Section \ref{sec_alt4}, using the
F4 algorithm \cite{F4} in the computer algebra system \textsc{Maple 13}. This program does not store
the cofactors of the Groebner bases, hence one cannot verify the result symbolically. The
computation takes less than $3$ minutes.

\begin{scriptsize}
\begin{verbatim}
########################################################
# Maple 13 program for computing with dual 3-nets
# G = Alt(4)
########################################################
# Preparation

with(LinearAlgebra);
isect:=proc(a,b,c,d) 
  evala(CrossProduct(CrossProduct(a,b),CrossProduct(c,d))): 
end proc:

ct:=Matrix(
[ [ 1,2,3,4,5,6,7,8,9,10,11,12 ],
  [ 2,1,4,3,7,8,5,6,12,11,10,9 ],
  [ 3,4,1,2,8,7,6,5,10,9,12,11 ],
  [ 4,3,2,1,6,5,8,7,11,12,9,10 ],
  [ 5,6,7,8,9,10,11,12,1,2,3,4 ],
  [ 6,5,8,7,11,12,9,10,4,3,2,1 ],
  [ 7,8,5,6,12,11,10,9,2,1,4,3 ],
  [ 8,7,6,5,10,9,12,11,3,4,1,2 ],
  [ 9,10,11,12,1,2,3,4,5,6,7,8 ],
  [ 10,9,12,11,3,4,1,2,8,7,6,5 ],
  [ 11,12,9,10,4,3,2,1,6,5,8,7 ],
  [ 12,11,10,9,2,1,4,3,7,8,5,6 ] 
 ]);

d:=array(1..6);

########################################################
# Part 1: We define the points of the dual 3-net 
# using a,b,c,d[1],...d[6] as indeterminates.

P:=[  [<1,0,0>,<0,1,0>,<1,-1,0>],
      [<0,1,1>,<1,0,1>,<0,0,1>],
      [<a,b,c>,0,<a,1+b,c>],
      [0,0,0],
      [<d[1],d[2],1>,0,<d[1],d[3],1>],
      [0,0,0],
      [0,0,0],
      [0,0,0],
      [<d[4],d[5],1>,0,<d[4],d[6],1>],
      [0,0,0],
      [0,0,0],
      [0,0,0]  ];

# As P[4,1], P[1,2], P[4,3] are coll, we may assume wlog that 
# P[4,1]=<a,b,c> and P[4,3]=<a,1+b,c>. 
# Similar argument for P[5,3] and P[9,3], using the fact that 
# these points cannot have last coordinate 0.

P[3,2]:=evala(isect(P[3,1],P[1,3],P[1,1],P[3,3])/c);
P[4,2]:=evala(isect(P[3,1],P[2,3],P[2,1],P[3,3])/a);

P[4,1]:=evala(isect(P[3,2],P[2,3],P[2,2],P[3,3])/(1+b));
P[4,3]:=evala(isect(P[1,1],P[4,2],P[2,1],P[3,2])/(1+b-c));

P[5,2]:=isect(P[1,1],P[5,3],P[9,1],P[1,3]);
P[9,2]:=isect(P[1,1],P[9,3],P[5,1],P[1,3]);

########################################################

P[5,1]:=isect(P[9,2],P[1,3],P[1,2],P[5,3]):
P[5,2]:=isect(P[9,1],P[1,3],P[1,1],P[5,3]):
P[5,3]:=isect(P[5,1],P[1,2],P[1,1],P[5,2]):

P[6,1]:=isect(P[9,2],P[4,3],P[2,2],P[5,3]):
P[6,2]:=isect(P[9,1],P[2,3],P[4,1],P[5,3]):
P[6,3]:=isect(P[5,1],P[2,2],P[4,1],P[5,2]):

P[7,1]:=isect(P[9,2],P[2,3],P[3,2],P[5,3]):
P[7,2]:=isect(P[9,1],P[3,3],P[2,1],P[5,3]):
P[7,3]:=isect(P[5,1],P[3,2],P[2,1],P[5,2]):

P[8,1]:=isect(P[9,2],P[3,3],P[4,2],P[5,3]):
P[8,2]:=isect(P[9,1],P[4,3],P[3,1],P[5,3]):
P[8,3]:=isect(P[5,1],P[4,2],P[3,1],P[5,2]):

P[9,1]:=isect(P[5,2],P[1,3],P[1,2],P[9,3]):
P[9,2]:=isect(P[5,1],P[1,3],P[1,1],P[9,3]):
P[9,3]:=isect(P[9,1],P[1,2],P[1,1],P[9,2]):

P[10,1]:=isect(P[5,2],P[3,3],P[2,2],P[9,3]):
P[10,2]:=isect(P[5,1],P[2,3],P[3,1],P[9,3]):
P[10,3]:=isect(P[9,1],P[2,2],P[3,1],P[9,2]):

P[11,1]:=isect(P[5,2],P[4,3],P[3,2],P[9,3]):
P[11,2]:=isect(P[5,1],P[3,3],P[4,1],P[9,3]):
P[11,3]:=isect(P[9,1],P[3,2],P[4,1],P[9,2]):

P[12,1]:=isect(P[5,2],P[2,3],P[4,2],P[9,3]):
P[12,2]:=isect(P[5,1],P[4,3],P[2,1],P[9,3]):
P[12,3]:=isect(P[9,1],P[4,2],P[2,1],P[9,2]):


########################################################
# Part 2: We construct the polynomial identities.  

eqs:=[]:
for i from 1 to 12 do
  for j from 1 to 12 do
      aa:=Determinant(<P[i,1]|P[j,2]|P[ct[i,j],3]>):
      eqs:=[op(eqs),aa]:
  end do
end do:

########################################################
# Part 3: We filter out the nonzero factors of the equations.

nepos:=[
	[1, 2, 6], [9, 1, 10], [1, 10, 8], [5, 9, 2], 
	[9, 5, 2], [1, 10, 2], [1, 3, 11], [1, 4, 12], 
	[1, 5, 7], [9, 1, 12], [9, 5, 4], [5, 9, 4], 
	[1, 5, 6], [1, 6, 11], [1, 9, 12], [1, 12, 4], 
	[1, 5, 8], [1, 7, 3], [1, 7, 12], [5, 9, 3], 
	[9, 1, 11], [1, 12, 7], [1, 9, 11], [5, 1, 6], 
	[5, 1, 7], [1, 6, 2], [1, 3, 7], [1, 8, 4], 
	[1, 11, 6], [5, 1, 8], [9, 5, 3], [1, 8, 10], 
	[1, 11, 3], [1, 4, 8],[1, 9, 10], [1, 2, 10]
];
noneqs:=map(_x->Determinant(<P[_x[1],1]|P[_x[2],2]|P[_x[3],3]>),nepos):

noneqs:=mul(x,x in noneqs):

eqs:=select(x->x<>0,eqs): nops(eqs);
eqs_reduced:=map(x->factor(x/gcd(x,noneqs)),eqs):
map(degree,eqs)-map(degree,eqs_reduced); 

########################################################
# Part 4: We compute the Groebner basis of the corresponding ideal.
# This Groebner basis shows that d[1]=d[4], d[2]=d[5], d[3]=d[6]. 
# The computation takes less that 3 minutes using the F4 algorithm.

gb:=Groebner[Basis](eqs_reduced,tdeg(a,b,c,d[1],d[2],d[3],d[4],d[5],d[6]));
\end{verbatim}
\end{scriptsize}

\section{Singular code for the case $G=\Alt_4$}
\label{app:alt4sing}

This appendix contains the implementation of the computations of Section \ref{sec_alt4}, using the
\textit{modStd} library \cite{modstd} of \textsc{Singular} \cite{DGPS}. This program does not store
the cofactors of the Groebner bases, hence one cannot verify the result symbolically. The
computation takes less than $3$ hours. 

\begin{scriptsize}
\begin{verbatim}
////////////////////////////////////////////////////////
// Singular 3.1 program for computing with dual 3-nets
// G = Alt(4)
////////////////////////////////////////////////////////
// Preparation

LIB "modstd.lib";

intmat ct[12][12]=
   1,2,3,4,5,6,7,8,9,10,11,12 ,
   2,1,4,3,7,8,5,6,12,11,10,9 ,
   3,4,1,2,8,7,6,5,10,9,12,11 ,
   4,3,2,1,6,5,8,7,11,12,9,10 ,
   5,6,7,8,9,10,11,12,1,2,3,4 ,
   6,5,8,7,11,12,9,10,4,3,2,1 ,
   7,8,5,6,12,11,10,9,2,1,4,3 ,
   8,7,6,5,10,9,12,11,3,4,1,2 ,
   9,10,11,12,1,2,3,4,5,6,7,8 ,
   10,9,12,11,3,4,1,2,8,7,6,5 ,
   11,12,9,10,4,3,2,1,6,5,8,7 ,
   12,11,10,9,2,1,4,3,7,8,5,6;

ring r=0,(a,b,c,d(1..6)),dp;

////////////////////////////////////////////////////////
// Part 1: We define the points of the dual 3-net 
// using a,b,c,d[1],...d[6] as indeterminates.

// As P[4,1], P[1,2], P[4,3] are coll, we may assume wlog that 
// P[4,1]=<a,b,c> and P[4,3]=<a,1+b,c>. 
// Similar argument for P[5,3] and P[9,3], using the fact that 
// these points cannot have last coordinate 0.

list pt_data=
   1,0,0,         0,1,0,      1,-1,0,      // 1
   0,1,1,         1,0,1,      0,0,1,       // 2
   a,b,c,         0,0,0,      a,1+b,c,     // 3
   0,0,0,         0,0,0,      0,0,0,       // 4    
   d(1),d(2),1,   0,0,0,      d(1),d(3),1, // 5 
   0,0,0,         0,0,0,      0,0,0,       // 6
   0,0,0,         0,0,0,      0,0,0,       // 7
   0,0,0,         0,0,0,      0,0,0,       // 8
   d(4),d(5),1,   0,0,0,      d(4),d(6),1, // 9
   0,0,0,         0,0,0,      0,0,0,       // 10
   0,0,0,         0,0,0,      0,0,0,       // 11
   0,0,0,         0,0,0,      0,0,0;       // 12

////////////////////////////////////////////////////////
// Procedures for manipulating the points of the dual 3-net:

proc rpt(int x, int i)
{
   return(list(pt_data[(x-1)*9+(i-1)*3+1..(x-1)*9+(i-1)*3+3]));
}

proc setpoint(int x, int i, list u)
{
   pt_data[(x-1)*9+(i-1)*3+1]=u[1];
   pt_data[(x-1)*9+(i-1)*3+2]=u[2];
   pt_data[(x-1)*9+(i-1)*3+3]=u[3];
}

proc divpoint(int x, int i, poly p)
{
   pt_data[(x-1)*9+(i-1)*3+1]=pt_data[(x-1)*9+(i-1)*3+1]/p;
   pt_data[(x-1)*9+(i-1)*3+2]=pt_data[(x-1)*9+(i-1)*3+2]/p;
   pt_data[(x-1)*9+(i-1)*3+3]=pt_data[(x-1)*9+(i-1)*3+3]/p;
}

proc crossprod(list u,list v)
{
   return(list(u[2]*v[3]-u[3]*v[2],-u[1]*v[3]+u[3]*v[1],u[1]*v[2]-u[2]*v[1]));
}

proc isect(u,v,w,z) 
{
   return(crossprod(crossprod(u,v),crossprod(w,z))): 
}

proc detpoints(int x, int y, int z)
{
   matrix m[3][3] = 
      pt_data[(x-1)*9+0*3+1..(x-1)*9+0*3+3],
      pt_data[(y-1)*9+1*3+1..(y-1)*9+1*3+3],
      pt_data[(z-1)*9+2*3+1..(z-1)*9+2*3+3];
   return(det(m));
}

////////////////////////////////////////////////////////
// Part 2: We set the remaining points of the dual 3-net. 

setpoint(3,2,isect(rpt(3,1),rpt(1,3),rpt(1,1),rpt(3,3)));
divpoint(3,2,c); print(rpt(3,2));

setpoint(4,2,isect(rpt(3,1),rpt(2,3),rpt(2,1),rpt(3,3)));
divpoint(4,2,a); print(rpt(4,2));

setpoint(4,1,isect(rpt(3,2),rpt(2,3),rpt(2,2),rpt(3,3)));
divpoint(4,1,1+b); print(rpt(4,1));

setpoint(4,3,isect(rpt(1,1),rpt(4,2),rpt(2,1),rpt(3,2)));
divpoint(4,3,1+b-c); print(rpt(4,3));

////////////////////////////////////////////////////////
setpoint(5,2,isect(rpt(1,1),rpt(5,3),rpt(9,1),rpt(1,3)));
setpoint(9,2,isect(rpt(1,1),rpt(9,3),rpt(5,1),rpt(1,3)));

////////////////////////////////////////////////////////
setpoint(5,1,isect(rpt(9,2),rpt(1,3),rpt(1,2),rpt(5,3)));
setpoint(5,2,isect(rpt(9,1),rpt(1,3),rpt(1,1),rpt(5,3)));
setpoint(5,3,isect(rpt(5,1),rpt(1,2),rpt(1,1),rpt(5,2)));

setpoint(6,1,isect(rpt(9,2),rpt(4,3),rpt(2,2),rpt(5,3)));
setpoint(6,2,isect(rpt(9,1),rpt(2,3),rpt(4,1),rpt(5,3)));
setpoint(6,3,isect(rpt(5,1),rpt(2,2),rpt(4,1),rpt(5,2)));

setpoint(7,1,isect(rpt(9,2),rpt(2,3),rpt(3,2),rpt(5,3)));
setpoint(7,2,isect(rpt(9,1),rpt(3,3),rpt(2,1),rpt(5,3)));
setpoint(7,3,isect(rpt(5,1),rpt(3,2),rpt(2,1),rpt(5,2)));

setpoint(8,1,isect(rpt(9,2),rpt(3,3),rpt(4,2),rpt(5,3)));
setpoint(8,2,isect(rpt(9,1),rpt(4,3),rpt(3,1),rpt(5,3)));
setpoint(8,3,isect(rpt(5,1),rpt(4,2),rpt(3,1),rpt(5,2)));

setpoint(9,1,isect(rpt(5,2),rpt(1,3),rpt(1,2),rpt(9,3)));
setpoint(9,2,isect(rpt(5,1),rpt(1,3),rpt(1,1),rpt(9,3)));
setpoint(9,3,isect(rpt(9,1),rpt(1,2),rpt(1,1),rpt(9,2)));

setpoint(10,1,isect(rpt(5,2),rpt(3,3),rpt(2,2),rpt(9,3)));
setpoint(10,2,isect(rpt(5,1),rpt(2,3),rpt(3,1),rpt(9,3)));
setpoint(10,3,isect(rpt(9,1),rpt(2,2),rpt(3,1),rpt(9,2)));

setpoint(11,1,isect(rpt(5,2),rpt(4,3),rpt(3,2),rpt(9,3)));
setpoint(11,2,isect(rpt(5,1),rpt(3,3),rpt(4,1),rpt(9,3)));
setpoint(11,3,isect(rpt(9,1),rpt(3,2),rpt(4,1),rpt(9,2)));

setpoint(12,1,isect(rpt(5,2),rpt(2,3),rpt(4,2),rpt(9,3)));
setpoint(12,2,isect(rpt(5,1),rpt(4,3),rpt(2,1),rpt(9,3)));
setpoint(12,3,isect(rpt(9,1),rpt(4,2),rpt(2,1),rpt(9,2)));


////////////////////////////////////////////////////////
// Part 3: We define the nonzero polynomials.

intmat nz_pos[36][3]=
   1,2,6,    9,1,10,   1,10,8,    5,9,2,
   9,5,2,    1,10,2,   1,3,11,    1,4,12,
   1,5,7,    9,1,12,   9,5,4,     5,9,4,
   1,5,6,    1,6,11,   1,9,12,    1,12,4,
   1,5,8,    1,7,3,    1,7,12,    5,9,3,
   9,1,11,   1,12,7,   1,9,11,    5,1,6,
   5,1,7,    1,6,2,    1,3,7,     1,8,4,
   1,11,6,   5,1,8,    9,5,3,     1,8,10,
   1,11,3,   1,4,8,    1,9,10,    1,2,10;

list nz;
for (int i=1; i<=36; i++) 
{
  nz=insert(nz,detpoints(nz_pos[i,1],nz_pos[i,2],nz_pos[i,3]));
}

proc nonzero_reduction(poly p)
{
   for (int i=1; i<=36; i++)
   {
      p=p/gcd(p,nz[i]);
   }
   return(p);
}

////////////////////////////////////////////////////////
// Part 4: We construct the polynomial identities.  

poly p;
ideal I=0;
for (int i=1; i<=12; i++)
{
   for (int j=1; j<=12; j++)
   {
      p=detpoints(i,j,ct[i,j]);
      if (p!=0) { I=nonzero_reduction(p),I; }
   }
}

////////////////////////////////////////////////////////
// Part 5: We compute the Groebner basis of the corresponding ideal.
// This Groebner basis shows that d[1]=d[4], d[2]=d[5], d[3]=d[6]. 
// The computation takes less that 3 hours using the modStd method.

ideal J=modStd(I);
J;
\end{verbatim}
\end{scriptsize}

\vspace{0,5cm}\noindent {\em Authors' addresses}:

\vspace{0.2cm}\noindent G\'abor P\'eter NAGY\\ Bolyai Institute \\
University of Szeged \\ Aradi v\'ertan\'uk tere 1\\
H-6720 Szeged (Hungary).\\
E--mail: \texttt{nagyg@math.u-szeged.hu}

\vspace{0.2cm}\noindent Nicola PACE\\ Department of Mathematical Sciences  \\
Florida Atlantic University \\
777 Glades Road \\
Boca Raton, FL 33431, USA}. \\
E--mail: \texttt{nicolaonline@libero.it}
\end{document}